\newtheorem{thm}{Theorem}
\newtheorem{prop}[thm]{Proposition}
\newtheorem{assert}[thm]{Assertion}
\newtheorem{remarks}[thm]{Remark}
\newtheorem{definition}[thm]{Definition}
\newtheorem{exl}[thm]{Example}
\numberwithin{thm}{section}
\newcommand{\adj}{\leftrightarrow}
\newcommand{\adjeq}{\leftrightarroweq}
\DeclareMathOperator{\id}{id}
\DeclareMathOperator{\Fix}{Fix}
\def\Z{{\mathbb Z}}
\def\N{{\mathbb N}}
\def\R{{\mathbb R}}
\begin{document}

\title{Remarks on Fixed Point Assertions in Digital Topology, 7}

\author{Laurence Boxer
\thanks{Department of Computer and Information Sciences, Niagara University, NY 14109, USA
and  \newline
Department of Computer Science and Engineering, State University of New York at Buffalo \newline
email: boxer@niagara.edu
}
}

\date{ }
\maketitle
\begin{abstract}
This paper continues a series discussing flaws in
published assertions concerning fixed points in
digital images. \newline \newline
Key words and phrases: digital image,
fixed point, metric space
\end{abstract}

\section{Introduction}
As stated in~\cite{Bx19}:
\begin{quote}
The topic of fixed points in digital topology has drawn
much attention in recent papers. The quality of
discussion among these papers is uneven; 
while some assertions have been correct and interesting, others have been incorrect, incorrectly proven, or reducible to triviality.
\end{quote}
Here, we continue the work
of~\cite{BxSt19,Bx19,Bx19-3,Bx20,Bx22,BxBad6}, 
discussing many shortcomings in earlier papers 
and offering corrections and improvements.

The topic of freezing
sets~\cite{BxFPsets2} belongs to the
fixed point theory of digital topology 
and is central to the 
paper~\cite{AlmuEtAl}. We show that
the latter paper contains no
original results.

Other papers studied herein
contain assertions of fixed points
in digital metric spaces.
Quoting and paraphrasing~\cite{Bx22}:
\begin{quote}
Authors of many weak papers concerning 
fixed points in digital topology
seek to obtain results in a ``digital metric space" (see section~\ref{DigMetSp} for its definition).
This seems to be a bad idea. We slightly paraphrase~\cite{Bx20}:
\begin{quote}
\begin{itemize}
    \item Nearly all correct nontrivial published 
    assertions concerning digital
    metric spaces use the metric and do not use the
    adjacency. As a result, the digital metric space
    seems to be an artificial notion,
    not really concerned with digital
    images.
    \item If $X$ is finite (as in a ``real 
    world" digital image) or the metric $d$ 
          is a common metric such as any $\ell_p$ metric, then 
          $(X,d)$ is uniformly discrete as a topological space, 
          hence not very interesting.
    \item Many published assertions concerning
          digital metric spaces mimic analogues for subsets 
          of Euclidean~$\R^n$. Often, the authors neglect 
          important differences between the topological 
          space $\R^n$ and digital images, resulting in 
          assertions that are incorrect or incorrectly ``proven,"
          trivial, or trivial when restricted to conditions that many regard as 
          essential. E.g., in many cases, functions that
          satisfy fixed point assertions must be constant or fail to be digitally continuous~\cite{BxSt19,Bx19,Bx19-3}.
\end{itemize}
\end{quote}
\end{quote}
Since acceptance for publication
of~\cite{BxBad6}, additional
highly flawed papers rooted in digital metric spaces
have come to our attention. These 
include~\cite{DalalJAM, GayaHema, GuptaEtAl, Rani, SalJha, Sug, TiwariEtAl}.

\section{Preliminaries}
Much of the material in this section is quoted or
paraphrased from~\cite{Bx20}.

We use $\N$ to represent the natural numbers,
$\Z$ to represent the integers, $\R$ to 
represent the reals, and $\N^* = \N \cup \{0\}$.

A {\em digital image} is a pair $(X,\kappa)$, where $X \subset \Z^n$ 
for some positive integer $n$, and $\kappa$ is an adjacency relation on $X$. 
Thus, a digital image is a graph.
In order to model the ``real world," we usually take $X$ to be finite,
although there are several papers that consider
infinite digital images. The points of $X$ may be 
thought of as the ``black points" or foreground of a 
binary, monochrome ``digital picture," and the 
points of $\Z^n \setminus X$ as the ``white points"
or background of the digital picture.

\subsection{Adjacencies, 
continuity, fixed point}

In a digital image $(X,\kappa)$, if
$x,y \in X$, we use the notation
$x \adj_{\kappa}y$ to
mean $x$ and $y$ are $\kappa$-adjacent; we may write
$x \adj y$ when $\kappa$ can be understood. 
We write $x \adjeq_{\kappa}y$, or $x \adjeq y$
when $\kappa$ can be understood, to
mean 
$x \adj_{\kappa}y$ or $x=y$.

The most commonly used adjacencies in the study of digital images 
are the $c_u$ adjacencies. These are defined as follows.
\begin{definition}
Let $X \subset \Z^n$. Let $u \in \Z$, $1 \le u \le n$. Let 
$x=(x_1, \ldots, x_n),~y=(y_1,\ldots,y_n) \in X$. Then $x \adj_{c_u} y$ if 
\begin{itemize}
    \item $x \neq y$,
    \item for at most $u$ distinct indices~$i$,
    $|x_i - y_i| = 1$, and
    \item for all indices $j$ such that $|x_j - y_j| \neq 1$ we have $x_j=y_j$.
\end{itemize}
\end{definition}

\begin{definition}
\label{path}
    Let $(X,\kappa)$ be a digital image. Let
    $x,y \in X$. Suppose there is a set
    $P = \{x_i\}_{i=0}^n \subset X$ such that
$x=x_0$, $x_i \adj_{\kappa} x_{i+1}$ for
$0 \le i < n$, and $x_n=y$. Then $P$ is a
{\em $\kappa$-path} (or just a {\em path}
when $\kappa$ is understood) in $X$ from $x$ to $y$,
and $n$ is the {\em length} of this path.
\end{definition}

\begin{definition}
{\rm \cite{Rosenfeld}}
A digital image $(X,\kappa)$ is
{\em $\kappa$-connected}, or just {\em connected} when
$\kappa$ is understood, if given $x,y \in X$ there
is a $\kappa$-path in $X$ from $x$ to $y$.
\end{definition}

\begin{definition}
{\rm \cite{Rosenfeld, Bx99}}
Let $(X,\kappa)$ and $(Y,\lambda)$ be digital
images. A function $f: X \to Y$ is 
{\em $(\kappa,\lambda)$-continuous}, or
{\em $\kappa$-continuous} if $(X,\kappa)=(Y,\lambda)$, or
{\em digitally continuous} when $\kappa$ and
$\lambda$ are understood, if for every
$\kappa$-connected subset $X'$ of $X$,
$f(X')$ is a $\lambda$-connected subset of $Y$.
\end{definition}

\begin{thm}
{\rm \cite{Bx99}}
A function $f: X \to Y$ between digital images
$(X,\kappa)$ and $(Y,\lambda)$ is
$(\kappa,\lambda)$-continuous if and only if for
every $x,y \in X$, if $x \adj_{\kappa} y$ then
$f(x) \adjeq_{\lambda} f(y)$.
\end{thm}

\begin{remarks}
    For $x,y \in X$, $P = \{x_i\}_{i=0}^n \subset X$
is a $\kappa$-path from $x$ to $y$ if and only if
$f: [0,n]_{\Z} \to X$, given by $f(i)=x_i$, is
$(c_1,\kappa)$-continuous. Therefore, we may also
call such a function $f$ a {\em ($\kappa$-)path}
in $X$ from $x$ to $y$.
\end{remarks}

We use $1_X$ to denote the identity function on $X$, 
and $C(X,\kappa)$ for the set of functions 
$f: X \to X$ that are $\kappa$-continuous.

A {\em fixed point} of a function $f: X \to X$ 
is a point $x \in X$ such that $f(x) = x$. We denote by
$\Fix(f)$ the set of fixed points of $f: X \to X$.

Let $X = \Pi_{i=1}^n X_i$. The 
{\em projection to the
$j^{th}$ coordinate}
function $p_j: X \to X_j$
is the function defined
for $x = (x_1, \ldots, x_n) \in X$, $x_i \in X_i$, by $p_j(x) = x_j$.

As a convenience, if $x$ is a point in the
domain of a function $f$, we will often
abbreviate ``$f(x)$" as ``$fx$".

\subsection{Digital metric spaces}
\label{DigMetSp}
A {\em digital metric space}~\cite{EgeKaraca15} is a triple
$(X,d,\kappa)$, where $(X,\kappa)$ is a digital image and $d$ is a metric on $X$. The
metric is usually taken to be the Euclidean
metric or some other $\ell_p$ metric; 
alternately, $d$ might be taken to be the
shortest path metric. These are defined
as follows.
\begin{itemize}
    \item Given 
          $x = (x_1, \ldots, x_n) \in \Z^n$,
          $y = (y_1, \ldots, y_n) \in \Z^n$,
          $p > 0$, $d$ is the $\ell_p$ metric
          if \[ d(x,y) =
          \left ( \sum_{i=1}^n
          \mid x_i - y_i \mid ^ p
          \right ) ^ {1/p}. \]
          Note the special cases: if $p=1$ we
          have the {\em Manhattan metric}; if
          $p=2$ we have the 
          {\em Euclidean metric}.
    \item \cite{ChartTian} If $(X,\kappa)$ is a 
          connected digital image, 
          $d$ is the {\em shortest path metric}
          if for $x,y \in X$, $d(x,y)$ is the 
          length of a shortest
          $\kappa$-path in $X$ from $x$ to $y$.
\end{itemize}

\begin{remarks}
If $X$ is finite or  
\begin{itemize}
\item {\rm \cite{Bx19-3}}
$d$ is an $\ell_p$ metric, or
\item $(X,\kappa)$ is connected and $d$ is 
the shortest path metric,
\end{itemize}
then $(X,d)$ is uniformly discrete.

For an example of a digital metric space
that is not uniformly discrete, see
Example~2.10 of~{\rm \cite{Bx20}}.
\end{remarks}

We say a sequence $\{x_n\}_{n=0}^{\infty}$ is 
{\em eventually constant} if for some $m>0$, 
$n>m$ implies $x_n=x_m$.
The notions of convergent sequence and complete digital metric space are often trivial, 
e.g., if the digital image is uniformly 
discrete, as noted in the following, a minor 
generalization of results 
of~\cite{HanBanach,BxSt19}.

\begin{prop}
\label{eventuallyConst}
{\rm \cite{Bx20}}
Let $(X,d)$ be a metric space. 
If $(X,d)$ is uniformly discrete,
then any Cauchy sequence in $X$
is eventually constant, and $(X,d)$ is a complete metric space.
\end{prop}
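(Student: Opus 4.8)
The plan is to unwind the definition of uniform discreteness to extract a uniform separation constant, then feed that very constant into the Cauchy condition. Recall that $(X,d)$ being uniformly discrete means there is some $\varepsilon > 0$ such that $d(x,y) \ge \varepsilon$ whenever $x \neq y$ in $X$. I would begin by fixing such an $\varepsilon$.

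Next, let $\{x_n\}_{n=0}^{\infty}$ be an arbitrary Cauchy sequence in $X$. Applying the Cauchy condition to this particular $\varepsilon$, I obtain an index $m$ such that $d(x_j, x_k) < \varepsilon$ for all $j,k > m$. The key observation is that the strict inequality $d(x_j, x_k) < \varepsilon$ is incompatible with $x_j \neq x_k$, since distinct points are separated by at least $\varepsilon$; hence $x_j = x_k$ for all $j,k > m$. In particular, fixing $k = m+1$, we get $x_n = x_{m+1}$ for all $n > m$, which is exactly the definition of an eventually constant sequence given above.

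Finally, to establish completeness I would observe that any eventually constant sequence converges: if $x_n = x_{m+1}$ for all $n > m$, then $d(x_n, x_{m+1}) = 0$ for all $n > m$, so $x_n \to x_{m+1}$. Since every Cauchy sequence in $X$ has just been shown to be eventually constant, every Cauchy sequence converges in $X$, which is precisely the statement that $(X,d)$ is complete.

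The argument presents no serious obstacle; the only point requiring care is the recognition that the separation constant $\varepsilon$ supplied by uniform discreteness may be used \emph{directly} as the tolerance in the Cauchy condition, so that the ``approximate equality'' forced by Cauchyness collapses to genuine equality. Everything else is routine.
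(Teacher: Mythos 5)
Your proof is correct. The paper states this proposition without proof, citing \cite{Bx20}, and your argument --- using the separation constant from uniform discreteness directly as the tolerance in the Cauchy condition to force eventual constancy, then noting that eventually constant sequences converge --- is precisely the standard argument behind the cited result (the only cosmetic difference being that your constant value sits at index $m+1$ rather than $m$, a harmless reindexing relative to the paper's definition of eventually constant).
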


We will use the following.

\begin{thm}
    \label{d-under-1-const}
    Let $(X,d,\kappa)$ be a connected digital 
    metric space in which 
    \begin{itemize}
        \item $d$ is the shortest path metric, or
        \item $d$ is any $\ell_p$ metric and
              $\kappa = c_1$.
    \end{itemize}
    Let $f,g: X \to X$ be such that 
    \[ d(fx,fy) < d(gx,gy) \mbox{ for all } 
        x,y \in X.\]
    If $g \in C(X,\kappa)$, then $f$ is a constant
    function.
\end{thm}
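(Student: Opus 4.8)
The plan is to combine the continuity of $g$ with the observation that, under either of the two hypotheses, $\kappa$-adjacent points lie at distance exactly $1$; this will force $f$ to send adjacent points to a common value, after which connectedness of $X$ makes $f$ globally constant. So first I would establish the distance-one fact for adjacencies. If $x \adj_\kappa y$, then under the shortest path metric $d(x,y)=1$, since the edge $\{x,y\}$ is itself a $\kappa$-path of length $1$ while $x \neq y$ rules out distance $0$. In the $\ell_p$ case with $\kappa=c_1$, the definition of $c_1$ forces $x$ and $y$ to agree in all but one coordinate and to differ there by exactly $1$, so again $d(x,y)=1$ for every $p$. I would stress that this is precisely where the restriction $\kappa=c_1$ is used in the $\ell_p$ branch: for $c_u$ with $u>1$, adjacent points may differ in several coordinates and hence sit at $\ell_p$-distance larger than $1$, which would break the argument.

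Next I would feed this into the hypotheses. For $x \adj_\kappa y$, the adjacency characterization of digital continuity stated above gives $gx \adjeq_\kappa gy$, so either $gx=gy$ or $gx \adj_\kappa gy$; in both cases the distance-one fact yields $d(gx,gy)\le 1$. Applying the assumed inequality to this pair gives $d(fx,fy) < d(gx,gy) \le 1$. I would then invoke discreteness of the relevant metric: shortest path distances are nonnegative integers, while $\ell_p$ distances between distinct points of $\Z^n$ are at least $1$. Either way $d(fx,fy)<1$ forces $fx=fy$, so $f$ identifies every pair of $\kappa$-adjacent points. Finally, for arbitrary $x,y \in X$ I would take a $\kappa$-path $x=x_0,\dots,x_m=y$ (available by connectedness) and conclude $fx_0=fx_1=\cdots=fx_m$, hence $fx=fy$ and $f$ is constant.

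The step I expect to require the most care is not the connectedness propagation, which is routine, but pinning down the metric-specific facts that drive it: one must know the exact adjacency distance and use the discreteness of $d$, and must notice why the theorem splits into the two listed cases. I would also flag a reading issue with the hypothesis: the inequality $d(fx,fy)<d(gx,gy)$ is vacuous at $x=y$, where it would read $0<0$, so it is to be understood for distinct $x,y$ (otherwise no such $f,g$ exist and the conclusion holds trivially). Once the distance-one fact and the discreteness of $d$ are in hand, the collapsing of adjacent points and the resulting constancy of $f$ follow immediately.
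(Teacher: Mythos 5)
Your proposal is correct and follows essentially the same route as the paper's proof: show that for $\kappa$-adjacent $x,y$ the continuity of $g$ and the choice of metric give $d(fx,fy) < d(gx,gy) \le 1$, hence $fx = fy$, and then propagate along a $\kappa$-path using connectedness. The only difference is that you spell out the metric-specific facts (adjacent points at distance exactly $1$, discreteness of $d$) that the paper compresses into one sentence, which is a fair expansion rather than a different argument.
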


\begin{proof}
    Let $x \adj_{\kappa} y$ in $X$. Then our
    choices of $d$ and $\kappa$, and the
    continuity of $g$, imply
    \[ d(fx,fy) < d(gx,gy) \le 1,
    \]
    so $d(fx,fy) = 0$, i.e., $fx=fy$.

    Now let $a,b \in X$. Since $X$ is connected,
    there is a $\kappa$-path $\{x_i\}_{i=0}^n$
    in $X$ from $a$ to $b$. By the above,
    $f(x_i)=f(x_{i+1})$ for $i \in \{1,\ldots,n-1\}$.
    Thus $f(a)=f(b)$. Hence $f$ is constant.
\end{proof}

Given a bounded metric space $(X,d)$,
the {\em diameter of} $X$ is
\[ diam(X) = \max \{ \, d(x,y) \mid
   x,y \in X \, \}.
\]

\section{\cite{AlmuEtAl} and freezing sets}
Freezing sets are defined as follows.
\begin{definition}
    {\rm \cite{BxFPsets2}}
 \label{freezeDef}
Let $(X,\kappa)$ be a
digital image. We say
$A \subset X$ is a 
{\em freezing set for $X$}
if given $g \in C(X,\kappa)$,
$A \subset \Fix(g)$ implies
$g=\id_X$.
\end{definition}

Several papers subsequent
to~\cite{BxFPsets2} have
further developed our knowledge of
freezing sets. Such a claim
cannot be made for~\cite{AlmuEtAl}.
Below, we quote verbatim (with some 
corrections noted) each assertion
presented as new
in~\cite{AlmuEtAl}, with
the assertion it mimics 
from~\cite{BxFPsets2}. 
Since~\cite{BxFPsets2} is cited in~\cite{AlmuEtAl},
the authors of~\cite{AlmuEtAl}
should have known better.

Note~\cite{AlmuEtAl} uses
``D.I" to abbreviate
``digital image".

\subsection{Theorem 2.4 of \cite{AlmuEtAl}}
Theorem 2.4 of \cite{AlmuEtAl} reads as
follows (note ``$V$ is a freezing
subset" should be ``$A$ is a freezing
subset").
\begin{quote}
    If $(U,\kappa)$ is a D.I 
    and $V$ is a freezing
subset for $U$ and 
$f: (U,\kappa) \to (V, \lambda)$ is an
isomorphism, then $f(A)$ is a
freezing set for $(V, \lambda)$.
\end{quote}

Compare with Theorem~5.3
of~\cite{BxFPsets2}:

\begin{quote}
    Let $A$ be a freezing set for
    the digital image $(X,\kappa)$ and let
$F: (X,\kappa) \to (Y, \lambda)$ 
be an isomorphism. Then $F(A)$ is a freezing set for $(Y, \lambda)$.
\end{quote}

\subsection{Theorem 2.5 of \cite{AlmuEtAl}}
Theorem 2.5 of \cite{AlmuEtAl} 
reads as follows. 

\begin{quote}
If $(U,C_{\Z}) \subset \Z^n$ is a D.I
for $z \in [1,n]$, $f \in C(U,c_z\})$, 
$\alpha, \alpha' \in U$:
$\alpha \adj_{c_z} \alpha'$ and
$p_i(f(\alpha)) \le p_i(\alpha) \le p_i(\alpha')$, then
$p_i(f(\alpha)) \le p_i(\alpha')$.
\end{quote}

Notes on Theorem 2.5 of \cite{AlmuEtAl}:
\begin{itemize}
    \item ``$(U,C_{\Z})$" should be
``$(U,c_{z})$".
    \item Each instance of ``$\le$"
should be ``$<$".
It is easy to
construct examples for which the stated conclusion
is not obtained if we allow ``$\le$"
instead of ``$<$".
\item Other errors 
appear in the 
``proof" of this assertion.
\end{itemize} 

Compare with Lemma~5.5
of~\cite{BxFPsets2}:

\begin{quote}
Let $(X,c_u)\subset \Z^n$ be a digital image, 
$1 \le u \le n$. Let $q, q' \in X$ be such that
$q \adj_{c_u} q'$.
Let $f \in C(X,c_u)$.
\begin{enumerate}
    \item If $p_i(f(q)) > p_i(q) > p_i(q')$
          then $p_i(f(q')) > p_i(q')$.
    \item If $p_i(f(q)) < p_i(q) < p_i(q')$
          then $p_i(f(q')) < p_i(q')$.
\end{enumerate}
\end{quote}

\subsection{Theorem 2.6 of \cite{AlmuEtAl}}
Theorem 2.6 of \cite{AlmuEtAl} 
reads as follows (note the ``$\alpha$" in ii. should be ``$a$").
\begin{quote}
    i. If $(U, \kappa)$ is a D.I and 
    $A \subset U$ is a retract
of $U$, then $(A, \kappa)$ has no freezing 
sets for $(U, \kappa)$.

ii. If $(U, \kappa)$ is a reducible digital 
image and $A$ is a freezing subset for $U$,
then if $a \in U$ is a reduction point of 
$u$, $\alpha \in A$.
\end{quote}

Since $U$ is a retract of $U$ via
the identity function, item i. is
incorrect as stated. If we focus
on proper subsets that are retracts, we
can compare item i) with Theorem~5.6
of~\cite{BxFPsets2}:

\begin{quote}
    Let $(X,\kappa)$ be a digital image. Let
$X'$ be a proper subset of $X$ that is a
retract of $X$. Then $X'$ does not contain
a freezing set for $(X,\kappa)$.
\end{quote}

Compare item ii. with Corollary~5.7
of~\cite{BxFPsets2}:

\begin{quote}
    Let $(X,\kappa)$ be a reducible digital image.
Let $x$ be a reduction point for $X$. Let $A$ be
a freezing set for $X$. Then $x \in A$.
\end{quote}

\subsection{Theorem 3.2 of \cite{AlmuEtAl}}
The {\em boundary of} $X \subset Z^n$
{\rm \cite{Rosenf79}} is
\[Bd(X) = \{x \in X \, | \mbox{ there exists } y \in \Z^n \setminus X \mbox{ such that } y \adj_{c_1} x\}.
\]

Theorem 3.2 of \cite{AlmuEtAl} 
reads as follows (note 
``$\forall z \in [1,n]$" should 
be ``for some $z \in [1,n]_{\Z}$").

\begin{quote}
    Let $U \subset \Z^n$ be finite, $A$ is a
    subset of $U$, $f \in C(U,c_z)$
    $\forall z \in [1,n]$. If
    $Bd(A) \subset \Fix(f)$ and
    $Bd(A)$ is a freezing set for
    $(U,c_z)$, then $A \subset \Fix(f)$.
\end{quote}

Since a superset of a freezing set is
a freezing set, the conclusion of
this assertion as written is immediate.

Proposition~5.12
of~\cite{BxFPsets2}, which has a stronger
hypothesis in not requiring $Bd(A)$ to be a
freezing set, states the following.

\begin{quote}
    Let $X \subset \Z^n$ be finite. Let $1 \le u \le n$.
Let $A \subset X$. Let $f \in C(X,c_u)$. If
$Bd(A) \subset \Fix(f)$, then $A \subset \Fix(f)$.
\end{quote}

\subsection{Theorem 3.3 of \cite{AlmuEtAl}}
Theorem 3.3 of \cite{AlmuEtAl} 
reads as follows.
\begin{quote}
    If $\Pi_{j=1}^n [0,m_j]_{\Z} \subset \Z^n$
    such that $m_j > 1$ $\forall j$ then
    $Bd(U)$ is a minimal freezing set for
    $(U,c_n)$.
\end{quote}
Presumably, $U = \Pi_{j=1}^n [0,m_j]_{\Z}$.
Also, the authors of \cite{AlmuEtAl} fail to
prove the claim of minimality.

Compare with Theorem~5.17
of~\cite{BxFPsets2}:

\begin{quote}
Let $X = \prod_{i=1}^n[0,m_i]_{\Z}  \subset \Z^n$, where $m_i>1$ for all~$i$.
Then $Bd(X)$ is a minimal freezing set for $(X,c_n)$.
\end{quote}

\subsection{Theorem 3.4 of \cite{AlmuEtAl}}
Theorem 3.4 of \cite{AlmuEtAl} 
reads as follows, where
``$\, \kappa^1,\kappa^2 \,$" should be
``$\kappa_1,\kappa_2$".

\begin{quote}
    If $(U_i,\kappa_i)$ is a set of
    D.I $\forall i \in [1,v]_{\Z}$,
    $U = \Pi_{i=1}^v U_i$
    and a subset $A$ of $U$ is a
    freezing set for
    $(U,NP_v(\kappa^1, \kappa^2, \ldots, \kappa_v))$, then we have $p_i(A)$ is a freezing
set for $(U_i,\kappa_i)$
$\forall i \in [1,v]_{\Z}$.
\end{quote}

Compare with Theorem~5.18
of~\cite{BxFPsets2}:

\begin{quote}
        Let $(X_i,\kappa_i)$ be a
digital image, $i \in [1,v]_{\Z}$. Let 
$X = \prod_{i=1}^v X_i$.
Let $A \subset X$. Suppose
$A$ is a freezing set
for $(X,NP_v(\kappa_1,\ldots,\kappa_v))$. Then
for each $i \in [1,v]_{\Z}$,
$p_i(A)$ is a freezing set
for $(X_i,\kappa_i)$.
\end{quote}

\section{\cite{DalalJAM}'s common fixed
point results}
S. Dalal is the author or coauthor of
three papers with the title
``Common Fixed Point Results for Weakly 
Compatible Map in Digital Metric
Spaces"~\cite{DalalJAM,DalalSJPMS,DalalEtAl}.
We have discussed flaws and improvements
of~\cite{DalalSJPMS} in~\cite{Bx19-3}, and
those of~\cite{DalalEtAl} 
in~\cite{Bx19}.
In this section, we discuss flaws and 
improvements of~\cite{DalalJAM}.

\begin{definition}
\label{DalalMaps}
{\rm ~\cite{DalalEtAl}}
Suppose $S$ and $T$ are self-maps on a digital metric space $(X,d,\kappa)$. Suppose
$\{x_n\}_{n=1}^{\infty}$ is a sequence in $X$ such that
\begin{equation}
\label{seqEqn}
lim_{n \to \infty} S(x_n) = lim_{n \to \infty} T(x_n) = t \mbox{ for some } t \in X.
\end{equation}
We have the following.
\begin{itemize}
\item $S$ and $T$ are called {\em compatible} if 
      $lim_{n \to \infty} d(S(T(x_n)), T(S(x_n))) = 0$ for all
      sequences $\{x_n\}_{n=1}^{\infty} \subset X$ that satisfy
      statement~(\ref{seqEqn}).
\item $S$ and $T$ are called {\em compatible of type (A)} if 
\[lim_{n \to \infty} d(S(T(x_n)), T(T(x_n))) = 0 = \]
\[ lim_{n \to \infty} d(T(S(x_n)), S(S(x_n)))\]
for all sequences $\{x_n\}_{n=1}^{\infty} \subset X$ that satisfy
      statement~(\ref{seqEqn}).
\item $S$ and $T$ are called {\em compatible of type (P)} if
\[ lim_{n \to \infty} d(S(S(x_n)), T(T(x_n))) = 0\]
for all sequences $\{x_n\}_{n=1}^{\infty} \subset X$ that satisfy
      statement~(\ref{seqEqn}).
\end{itemize}
\end{definition}

\begin{prop}
    {\rm \cite{DalalEtAl}}
    \label{compatEquiv}
    Let $S$ and $T$ be compatible maps 
    of type (A) on a digital metric 
    space $(X,d,\rho)$. If one of
    $S$ and $T$ is continuous, then
    $S$ and $T$ are compatible.
\end{prop}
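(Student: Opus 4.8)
The plan is to reduce the statement to a single continuity case by exploiting symmetry, and then to run the type (A) estimates through the triangle inequality. First I would observe that both the definition of \emph{compatible of type (A)} and the definition of \emph{compatible} are invariant under interchanging $S$ and $T$, and that the sequence condition~(\ref{seqEqn}) is likewise symmetric. Hence, without loss of generality, I may assume that $T$ is the continuous map; the case in which $S$ is continuous then follows by relabeling.

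Next, I would fix a sequence $\{x_n\}$ satisfying $\lim_{n\to\infty} S(x_n) = \lim_{n\to\infty} T(x_n) = t$. Using continuity of $T$ together with $T(x_n)\to t$ and $S(x_n)\to t$, I would deduce
\[ \lim_{n\to\infty} T(T(x_n)) = T(t) = \lim_{n\to\infty} T(S(x_n)). \]
The first defining equation of type (A), namely $\lim_{n\to\infty} d(S(T(x_n)),T(T(x_n)))=0$, combines with $T(T(x_n))\to T(t)$ to give $S(T(x_n))\to T(t)$. Finally, the triangle inequality
\[ d(S(T(x_n)),T(S(x_n))) \le d(S(T(x_n)),T(t)) + d(T(t),T(S(x_n))) \]
has both summands tending to $0$, whence $\lim_{n\to\infty} d(S(T(x_n)),T(S(x_n)))=0$; that is, $S$ and $T$ are compatible. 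Note that, once symmetry has fixed which map is continuous, only the first equation of type (A) is actually consumed.

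The step I expect to be the main obstacle is the use of continuity to pass from $x_n\to t$ to $T(x_n)\to T(t)$: this requires \emph{sequential} (metric) continuity, whereas ``continuous'' in a digital metric space most naturally refers to $\kappa$-continuity, which in general need not respect the metric $d$. The clean way to close this gap is to invoke uniform discreteness: in the settings catalogued in the Remark following the definition of a digital metric space (for instance, $X$ finite, or $d$ an $\ell_p$ metric, or the shortest-path metric on a connected image), Proposition~\ref{eventuallyConst} shows that every convergent sequence is eventually constant. Then $S(x_n)$ and $T(x_n)$ are eventually equal to $t$, so $S(T(x_n)),\,T(S(x_n)),\,T(T(x_n)),\,S(S(x_n))$ are all eventually constant, and the desired conclusion follows with no continuity hypothesis whatsoever; indeed type (A) already forces $S(t)=T(t)$. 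This strongly suggests that the continuity assumption is vacuous precisely under the hypotheses that make digital metric spaces nontrivial to these authors, which is presumably the observation to record after the proof.
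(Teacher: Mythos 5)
Your argument is correct, but be aware that the paper itself does not prove Proposition~\ref{compatEquiv}: it is quoted from \cite{DalalEtAl} as background, so there is no in-paper proof to compare against. Your direct argument --- using the symmetry of the definitions and of condition~(\ref{seqEqn}) to reduce to the case that $T$ is continuous, then deducing $T(T(x_n)) \to T(t)$ and $T(S(x_n)) \to T(t)$ from continuity at $t$, then combining the first type (A) equation with the triangle inequality to conclude $d(S(T(x_n)), T(S(x_n))) \to 0$ --- is the standard proof from the compatible-mappings literature, and it is valid precisely under the $\varepsilon$--$\delta$ (metric, sequential) reading of ``continuous'' that the paper explicitly adopts in the remark immediately following the proposition. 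Your worry about the hypothesis meaning $\kappa$-continuity is thus resolved by the paper itself: the intended hypothesis is metric continuity, so the direct proof needs no appeal to uniform discreteness. Your closing observation --- that when $(X,d)$ is uniformly discrete every sequence satisfying~(\ref{seqEqn}) is eventually constant (Proposition~\ref{eventuallyConst}), so the continuity hypothesis becomes vacuous and compatibility, type (A), and type (P) all coincide --- is exactly the point the paper goes on to make: it is the content of Theorem~\ref{compatEquivs} (quoted from \cite{Bx19}), and the same device drives the proof of Proposition~\ref{betterCompatLims}. So your proposal both correctly reconstructs the cited proof and anticipates the paper's improvement of it.
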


The continuity assumption of
Proposition~\ref{compatEquiv} is
of the $\varepsilon - \delta$ type of
analysis. It is often unnecessary.
Thus, we have the following.

\begin{thm}
\label{compatEquivs}
{\rm \cite{Bx19}}
Let $(X,d,\kappa)$ be a digital metric space, where either $X$ is finite 
or $d$ is an $\ell_p$ metric. Let $S$ and $T$ be self-maps on $X$. 
Then the following are equivalent.
\begin{itemize}
\item $S$ and $T$ are compatible.
\item $S$ and $T$ are compatible of type (A).
\item $S$ and $T$ are compatible of type (P).
\end{itemize}
\end{thm}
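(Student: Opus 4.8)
The plan is to exploit uniform discreteness so that, for any admissible sequence, all three compatibility conditions collapse to one and the same sequence-independent statement; their mutual equivalence is then immediate. First I would note that under either hypothesis the Remark preceding Proposition~\ref{eventuallyConst} guarantees that $(X,d)$ is uniformly discrete, so there is $\varepsilon>0$ with $d(x,y)\ge\varepsilon$ for all distinct $x,y\in X$. Let $\{x_n\}$ be any sequence satisfying~(\ref{seqEqn}), with $\lim_{n\to\infty}S(x_n)=\lim_{n\to\infty}T(x_n)=t$. A convergent sequence is Cauchy, so Proposition~\ref{eventuallyConst} (equivalently, uniform discreteness applied directly) forces both $\{S(x_n)\}$ and $\{T(x_n)\}$ to be eventually constant. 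Hence there is an $N$ with $S(x_n)=t$ and $T(x_n)=t$ for all $n>N$.

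The key step is to substitute these equalities into the expressions of Definition~\ref{DalalMaps}. For $n>N$ one has
\[
S(T(x_n))=S(t),\quad T(S(x_n))=T(t),\quad S(S(x_n))=S(t),\quad T(T(x_n))=T(t),
\]
so that every distance occurring in the three definitions reduces to $d(S(t),T(t))$: this holds for the compatibility expression $d(S(T(x_n)),T(S(x_n)))$, for both type~(A) expressions $d(S(T(x_n)),T(T(x_n)))$ and $d(T(S(x_n)),S(S(x_n)))$, and for the type~(P) expression $d(S(S(x_n)),T(T(x_n)))$. Consequently each of the three limits equals $d(S(t),T(t))$, and each vanishes precisely when $S(t)=T(t)$.

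I would then conclude that each of the three compatibility properties is equivalent to the single condition that $S(t)=T(t)$ for every $t$ arising as the common limit of some sequence satisfying~(\ref{seqEqn}). Since all three reduce to this same condition, they are equivalent to one another. I do not anticipate a serious obstacle; the only point requiring care is ensuring that $S(x_n)=t$ and $T(x_n)=t$ hold simultaneously for all large $n$, which is handled by taking the larger of the two eventual-constancy thresholds, so that the substitution above is justified.
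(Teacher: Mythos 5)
Your proof is correct and takes essentially the same approach as the source: since the theorem is quoted from \cite{Bx19}, the paper gives no proof here, but the intended argument---and the one the paper itself uses for the closely related Proposition~\ref{betterCompatLims}---is exactly yours, namely that uniform discreteness forces $S(x_n)=T(x_n)=t$ for almost all $n$, after which every distance in Definition~\ref{DalalMaps} collapses to $d(S(t),T(t))$ and all three compatibility notions become equivalent to $S(t)=T(t)$ at every such limit point $t$.
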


Indeed, other notions defined as
variants on compatibility are also
equivalent to compatibility
(see Theorem~5.6 of~\cite{BxBad6}).

\begin{prop}
    {\rm \cite{DalalJAM}}
    \label{compatLims}
    Let $S$ and $T$ be compatible maps 
    on a digital metric space
    $(X,d,\rho)$ into itself. Suppose
    $\lim_{n \to \infty} Sx_n = 
    \lim_{n \to \infty} Tx_n = t$
    for some $t \in X$. Then

    (a) $\lim_{n \to \infty} STx_n =
    Tt$ if $T$ is continuous at $t$.

    (b) $\lim_{n \to \infty} TSx_n =
    St$ if $S$ is continuous at $t$.
\end{prop}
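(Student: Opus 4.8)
The plan is to prove this with the standard triangle-inequality argument from the metric fixed point literature (essentially Jungck's lemma on compatible maps), using only the metric structure of $(X,d,\rho)$; the adjacency $\rho$ will play no role, consistent with the paper's recurring observation that such ``digital metric space'' assertions do not genuinely use the digital image. Throughout I read ``continuous at $t$'' in the $\varepsilon$--$\delta$ (equivalently, sequential) sense for the metric $d$, so that $y_n \to t$ implies $Ty_n \to Tt$.

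For part (a), I would first observe that since $\lim_{n \to \infty} Sx_n = t$ and $T$ is continuous at $t$, sequential continuity gives $\lim_{n \to \infty} T(Sx_n) = Tt$, i.e. $\lim_{n \to \infty} TSx_n = Tt$. Next, because $S$ and $T$ are compatible and the hypothesis $\lim_{n\to\infty} Sx_n = \lim_{n\to\infty} Tx_n = t$ is exactly the condition in Definition~\ref{DalalMaps} that triggers the compatibility conclusion, we have $\lim_{n \to \infty} d(STx_n, TSx_n) = 0$. These two facts combine via the triangle inequality
\[ d(STx_n, Tt) \le d(STx_n, TSx_n) + d(TSx_n, Tt), \]
whose right-hand side tends to $0 + 0 = 0$; hence $\lim_{n \to \infty} STx_n = Tt$, as claimed.

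Part (b) is entirely symmetric in the roles of $S$ and $T$. Here I would use that $\lim_{n \to \infty} Tx_n = t$ together with continuity of $S$ at $t$ to obtain $\lim_{n \to \infty} S(Tx_n) = St$, that is $\lim_{n \to \infty} STx_n = St$; then again invoke compatibility, $\lim_{n \to \infty} d(STx_n, TSx_n) = 0$, and apply the triangle inequality $d(TSx_n, St) \le d(TSx_n, STx_n) + d(STx_n, St)$ to conclude $\lim_{n \to \infty} TSx_n = St$.

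I do not expect a serious obstacle: the argument is routine once one selects the correct auxiliary sequence to which continuity is applied. The only point needing care is the bookkeeping about which composite sequence converges by continuity (for (a) it is $TSx_n$, obtained by applying the continuous map $T$ to $Sx_n \to t$) and which is controlled by compatibility, so that the two estimates match up and the triangle inequality closes the gap. Matching these correctly is the entire content of the proof.
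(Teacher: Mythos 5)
Your proof is correct, and it is the standard Jungck-style argument: apply sequential continuity of $T$ at $t$ to the sequence $Sx_n \to t$ to get $TSx_n \to Tt$, invoke compatibility (the hypothesis is precisely condition~(\ref{seqEqn}) of Definition~\ref{DalalMaps}, so $d(STx_n,TSx_n) \to 0$), and close with the triangle inequality; part (b) is symmetric. Be aware, though, that the paper never proves Proposition~\ref{compatLims} itself --- it is quoted from \cite{DalalJAM} as background --- so the only in-paper proof to compare with is that of the strengthened Proposition~\ref{betterCompatLims}, and that proof is genuinely different from yours. There the continuity hypothesis is dropped in favor of assuming $(X,d)$ uniformly discrete, and the argument trivializes: for almost all $n$ one has $Sx_n = Tx_n = t$ exactly, so $TSx_n = Tt$ on the nose, and compatibility together with discreteness forces $STx_n = TSx_n$ for almost all $n$, giving $STx_n = Tt$ as an eventually constant sequence rather than merely in the limit. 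Each approach buys something: yours works in an arbitrary metric space with no discreteness assumption (it is the classical lemma on compatible maps), while the paper's variant needs no continuity hypothesis at all. The paper's surrounding discussion explains why it prefers the latter: in the usual digital settings $(X,d)$ is uniformly discrete, so every self-map is $\varepsilon - \delta$ continuous and the continuity hypothesis your argument relies on is automatically satisfied --- which is exactly the sense in which the paper regards Proposition~\ref{compatLims} as reducing to the trivial Proposition~\ref{betterCompatLims}.
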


As above, the continuity assumption of
Proposition~\ref{compatLims} is
of the $\varepsilon - \delta$ type of
analysis and is often unnecessary.
Thus, we have the following.

\begin{prop}
    \label{betterCompatLims}
    Let $S$ and $T$ be compatible maps 
    on a digital metric space
    $(X,d,\rho)$ into itself. 
    Let $(X,d)$ be uniformly
discrete. Suppose
    $\lim_{n \to \infty} Sx_n = 
    \lim_{n \to \infty} Tx_n = t$
    for some $t \in X$. Then

    {\rm (a)} $\lim_{n \to \infty} STx_n =
    Tt$.

    {\rm (b)} $\lim_{n \to \infty} TSx_n =
    St$.
\end{prop}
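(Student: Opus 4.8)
The plan is to use uniform discreteness, via Proposition~\ref{eventuallyConst}, to replace the $\varepsilon$-$\delta$ continuity hypotheses of Proposition~\ref{compatLims} by exact equalities at large indices. First I would note that a sequence converging to $t$ is Cauchy, so by Proposition~\ref{eventuallyConst} each of $\{Sx_n\}$ and $\{Tx_n\}$ is eventually constant; since both converge to $t$, there is an index $N$ with $Sx_n = t$ and $Tx_n = t$ for all $n > N$.

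Next I would evaluate the two composite sequences at these indices. For $n > N$ we obtain $STx_n = S(Tx_n) = St$ and $TSx_n = T(Sx_n) = Tt$, so both composites are themselves eventually constant. This already yields $\lim_{n \to \infty} STx_n = St$ and $\lim_{n \to \infty} TSx_n = Tt$, with no use of continuity.

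The remaining step identifies $St$ with $Tt$, and this is where compatibility is used. Because $\{x_n\}$ satisfies hypothesis~(\ref{seqEqn}), compatibility of $S$ and $T$ gives $\lim_{n \to \infty} d(STx_n, TSx_n) = 0$. But for $n > N$ the distance $d(STx_n, TSx_n)$ equals the constant $d(St, Tt)$, so that constant must vanish, i.e.\ $St = Tt$. Substituting into the limits above gives $\lim_{n \to \infty} STx_n = St = Tt$, which is~(a), and $\lim_{n \to \infty} TSx_n = Tt = St$, which is~(b).

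I expect the only real subtlety to be conceptual rather than computational: one must recognize that uniform discreteness collapses ``continuity at $t$'' into the genuine equalities $Sx_n = Tx_n = t$ for large $n$, rendering the continuity assumptions of Proposition~\ref{compatLims} unnecessary. Once this is seen, the observation that compatibility by itself forces $St = Tt$ follows at once, and parts~(a) and~(b) are simply two readings of that single identity.
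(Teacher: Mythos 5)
Your proof is correct and follows essentially the same route as the paper's: use uniform discreteness to get $Sx_n = Tx_n = t$ for almost all $n$, observe the composites are then eventually equal to $St$ and $Tt$, and invoke compatibility to force $St = Tt$. The paper's version is just a compressed form of this argument; your write-up usefully makes explicit the step (only implicit in the paper) that compatibility plus eventual constancy yields the identity $St = Tt$.
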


\begin{proof}
    By uniform discreteness, we have,
    for almost all~$n$, 
    $Sx_n=Tx_n=t$. Thus for 
    almost all~$n$,
    \[ STx_n =  
    \mbox{ (by compatibility) } TSx_n
       = Tt,\]
    which proves (a). Assertion~(b)
    follows similarly.
\end{proof}

The following appears as
Theorem~3.1 of~\cite{DalalJAM}.

\begin{assert}
\label{DalCompatibleAssert}
    Let $A$, $B$, $S$ and $T$ be four 
    self-mappings of a complete 
    digital metric space $(X,d,\rho)$
    satisfying the following 
    conditions.

    (a) $S(X) \subset B(X)$ and
        $T(X) \subset A(X)$;

    (b) the pairs $(A,S)$ and $(B,T)$
        are compatible;

    (c) one of $S,T,A$, and $B$ is
        continuous;

    (d) $d(Sx,Tx) \le
     \phi( \max \{ d(Ax,By),
     d(Sx,Ax), d(Sx,By)\})$ for all
     $x,y \in X$, where
     $\phi: [0,\infty) \to [0,\infty)$
     is continuous, monotone increasing,
     and $\phi(t) < t$ for $t > 0$.
     Then $A,B,S$, and $T$ have a
     unique common fixed point in $X$.
\end{assert}

However, the argument offered as a
proof for 
Assertion~\ref{DalCompatibleAssert}
in~\cite{DalalJAM} is marred by
an error that also appears 
in~\cite{DalalSJPMS}: A sequence
$\{y_n\}_{n=0}^{\infty} \subset X$ of
points is constructed such that
$lim_{n \to \infty} d(y_{2n},y_{2n+1})
=0$. It is wrongly concluded that
$\{y_n\}_{n=0}^{\infty}$ is a
Cauchy sequence; a counterexample
is given at Example~6.2
of~\cite{Bx19-3}.

We must conclude that
Assertion~\ref{DalCompatibleAssert}
is unproven.

\section{\cite{GayaHema}'s path-length
metric assertions}
In this section, we discuss flaws
in the paper~\cite{GayaHema}.

\subsection{Unoriginal assertions}
Several of the assertions presented
as original appear in earlier
literature.

Theorems 3.1, 3.2, and 3.3
of~\cite{GayaHema} duplicate
results of~\cite{HanBanach}, a paper
cited in~\cite{GayaHema}. 
While~\cite{HanBanach} uses 
the Euclidean metric, \cite{GayaHema}
should have noted that the proofs
of~\cite{HanBanach} also work for
their respective analogs using the
shortest-path metric.

Theorem~3.1 of~\cite{GayaHema} states
the following.
\begin{quote}
    In a digital metric space 
    $(Y^*,d)$, if a sequence
    $\{x_n\}_{n=1}^{\infty}$
    is a Cauchy sequence, then
    $x_n=x_m$ for all
    $m,n > \alpha$, where 
    $\alpha \in \N$.
\end{quote}
But this duplicates Proposition~3.5
of~\cite{HanBanach}.\newline 

Theorem~3.2 of~\cite{GayaHema} states
the following.
\begin{quote}
    In a digital metric space
    $(Y^*,d)$, if a sequence
    $\{x_n\}_{n=1}^{\infty}$
    converges to a limit
    $L \in Y^*$, then there is an
    $\alpha \in \N$ such that for all
    $m,n > \alpha$, $x_n = L$ i.e.
    $x_n=x_{n+1}=x_{n+2} = \cdots = L$
    for $n \ge \alpha$.
\end{quote}
But this duplicates Proposition~3.9
of~\cite{HanBanach}. \newline

Theorem 3.3 of~\cite{GayaHema} states
the following.
\begin{quote}
    A digital metric space 
    $(Y^*,d)$ is complete.
\end{quote}
But this duplicates Theorem~3.11
of~\cite{HanBanach}.

\subsection{Contractions}

The following definition is 
not attributed to a source
in~\cite{GayaHema}. It appears
in~\cite{EgeKaraca-Ban}, which
was cited in~\cite{GayaHema}.

\begin{definition}
\label{contraction}
    A self mapping $S$ on a digital
    metric space $(Y,d)$ is said to 
    be a {\em digital contraction mapping} 
    if and only if there exists a 
    non-negative number $q < 1$
    such that
    \[ d(Sx,Sy) \le q d(x,y)
    ~\forall ~x,y \in Y.
    \]
\end{definition}

The following is stated as
Corollary~3.1 of~\cite{GayaHema}.
\begin{assert}
\label{GH3.1}
    Let $(Y,d)$ be a digital metric space endowed with graph
    $G$, where $d$ is the path length metric. Let
    $S: Y \to Y$ 
    be a digital contraction map on
    $Y$. Then $S$ has a unique
    fixed point.
\end{assert}

\begin{remarks}
Without the assumption 
    of connectedness, the path length 
    metric is undefined, so 
    Assertion~\ref{GH3.1} as written
    is, at best, misleading.   
\end{remarks}

\begin{prop}
    If in Assertion~\ref{GH3.1} $G$
    is connected, then $S$ must be a 
    constant map.
\end{prop}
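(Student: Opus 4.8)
The plan is to invoke Theorem~\ref{d-under-1-const} with $f = S$ and $g = 1_Y$, the identity map on $Y$; let $\kappa$ denote the adjacency of the graph $G$. Since $G$ is now assumed connected, the path length (shortest path) metric $d$ is defined, so the first bullet of Theorem~\ref{d-under-1-const} is in force. First I would note that $1_Y \in C(Y,\kappa)$, as the identity is trivially $\kappa$-continuous. Then I would check the strict-inequality hypothesis: for $x \neq y$ in $Y$ the shortest path metric gives $d(x,y) \ge 1 > 0$, so Definition~\ref{contraction} together with $q < 1$ yields $d(Sx,Sy) \le q\, d(x,y) < d(x,y) = d(1_Y x, 1_Y y)$. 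Theorem~\ref{d-under-1-const} then delivers the conclusion that $f = S$ is constant.

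The only delicate point is that the hypothesis of Theorem~\ref{d-under-1-const} is written ``for all $x,y \in X$,'' while the strict inequality just obtained holds only for distinct $x,y$ (at $x=y$ both sides are $0$). I would read that hypothesis in the sole sensible way---its proof invokes the inequality only at $\kappa$-adjacent, hence distinct, points---and, to avoid leaning on that reading, I would also record the following direct argument, which simply transcribes the proof of Theorem~\ref{d-under-1-const} to the contraction setting. If $x \adj_{\kappa} y$ then $d(x,y) = 1$, so $d(Sx,Sy) \le q\, d(x,y) = q < 1$; but under the shortest path metric every distance is a non-negative integer, forcing $d(Sx,Sy) = 0$, i.e.\ $Sx = Sy$. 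Given arbitrary $a,b \in Y$, connectedness supplies a $\kappa$-path $\{x_i\}_{i=0}^{n}$ from $a$ to $b$, and applying the previous sentence to consecutive vertices gives $S(a) = S(b)$; hence $S$ is constant.

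I expect this boundary case $x=y$ to be the only real obstacle, which is exactly why I would keep the integrality argument on hand as a safeguard; everything else is routine.
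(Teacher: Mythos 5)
Your proposal is correct and takes essentially the same route as the paper: the paper's own proof likewise verifies $d(Sx,Sy) \le q\,d(x,y) = q < 1 = d(\id_Y(x),\id_Y(y))$ for $\kappa$-adjacent $x,y$ and then invokes Theorem~\ref{d-under-1-const} with $g = \id_Y$. Your explicit handling of the $x=y$ boundary case (where the theorem's ``for all $x,y$'' hypothesis cannot literally hold, since both sides vanish) is a subtlety the paper silently glosses over, and your fallback direct argument---adjacent points have equal images, then connectedness via a path gives constancy---is exactly the right safeguard, being a transcription of the proof of Theorem~\ref{d-under-1-const} rather than a genuinely different method.
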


\begin{proof}
    Since $d$ is the path length metric,
    for $x \adj y$, we have
    \[ d(Sx,Sy) \le qd(x,y) = q < 1
       = d(\id_Y(x),\id_Y(y)).    
    \]
    The assertion follows from
    Theorem~\ref{d-under-1-const}.
\end{proof}

\subsection{Quasi-contractions}
\begin{definition}
    \label{quasicontraction}
    {\rm \cite{{Circ}}}
    A self mapping $S$ on a
    metric space $(Y,d)$ is said to 
    be a {\em quasi-contraction} 
    if and only if there exists a 
    non-negative number $q < 1$
    such that
    \[ \begin{array}{c}
        d(Sx,Sy) \le
    q \max \{ d(x,y), d(x,Sx), d(y,Sy),
    d(x,Sy), d(y,Sx)\} \\
     \forall ~x,y \in Y
    \end{array}
    \]
\end{definition}

Theorem~3.4 of~\cite{GayaHema} is stated
as follows.
\begin{thm}
\label{GH3.4}
    Let $(Y,d,\kappa)$ be a digital
    metric space, where $d$ is the
    shortest path metric. Let
    $S: Y \to Y$ be a quasi-contraction.
    Then \newline
    (a) for all $x \in Y$,
        $\lim_{i \to \infty} S^i x = u_1 \in Y$; \newline
    (b) $u_1$ is the unique fixed point
        of~$S$, and \newline
    (c) $d(S^i x, u_1) \le \frac{q^i}{1-q}
    d(x,Sx)$ for all $x \in Y$,
    $i \in \N$, where $q$ is as in
    Definition~\ref{quasicontraction}.   
\end{thm}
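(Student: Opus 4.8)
The plan is to run Ćirić's classical quasi-contraction argument, exploiting the fact that the shortest path metric is integer-valued to make the completeness and convergence steps automatic. First I would note that the shortest path metric is only defined when $(Y,\kappa)$ is connected, and that in this case distinct points satisfy $d(x,y) \ge 1$, so $(Y,d)$ is uniformly discrete. By Proposition~\ref{eventuallyConst} it is therefore complete and every Cauchy sequence is eventually constant. This reduces parts~(a) and~(b) to showing that each orbit $\{S^i x\}_{i=0}^{\infty}$ is a Cauchy sequence: once that is known, the sequence is eventually equal to some $u_1 \in Y$, and then $S u_1 = u_1$ follows immediately.

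The heart of the proof is the orbit-diameter estimate. Fix $x \in Y$ and write $O_n = \{x, Sx, \ldots, S^n x\}$. Applying the quasi-contraction inequality to any pair $S^i x, S^j x$ with $1 \le i,j \le n$, I would show each such distance is at most $q\, diam(O_n)$, since every one of the five terms inside the maximum is a distance between points of $O_n$. Consequently $diam(O_n)$ must be realized by a pair involving the base point $x$, say $diam(O_n) = d(x, S^k x)$ with $1 \le k \le n$; the triangle inequality then gives $d(x, S^k x) \le d(x, Sx) + q\, diam(O_n)$, so that $diam(O_n) \le \frac{1}{1-q}\, d(x, Sx)$ uniformly in $n$, and the orbit is bounded.

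With the orbit bounded, I would establish geometric decay by induction on $i$: applying the same diameter bound to the orbit shifted to start at $Sx$ yields $d(S^i x, S^{i+k} x) \le q^i\, diam(O_{i+k}) \le \frac{q^i}{1-q}\, d(x, Sx)$. Since $q < 1$ this shows $\{S^i x\}$ is Cauchy, which settles~(a); and letting $k \to \infty$, using continuity of the metric together with the fact that the sequence is eventually constant equal to $u_1$, gives $d(S^i x, u_1) \le \frac{q^i}{1-q} d(x, Sx)$, which is~(c). For the uniqueness in~(b), if $u$ and $v$ are fixed points then the quasi-contraction inequality collapses to $d(u,v) \le q\, d(u,v)$, forcing $d(u,v) = 0$; since each orbit limit $u_1$ is a fixed point and fixed points are unique, every orbit converges to the same point $u_1$.

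I expect the main obstacle to be the diameter lemma of the second paragraph, in particular the combinatorial step showing that $diam(O_n)$ is attained at a pair containing the base point $x$; the induction for the factor $q^i$ is the second nonroutine ingredient, while uniqueness and the passage from Cauchy to eventually constant are routine here. One digital-specific observation worth recording is that, because $d$ takes only integer values, the estimate in~(c) forces $d(S^i x, u_1) = 0$ as soon as $q^i/(1-q) < 1$; thus each orbit actually reaches $u_1$ after finitely many iterations, so the convergence asserted in~(a) is in fact attainment in a finite number of steps.
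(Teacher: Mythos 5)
Your argument is correct, but note that the paper itself contains no proof of Theorem~\ref{GH3.4}: the statement is quoted verbatim from \cite{GayaHema} (where it adapts {\'C}iri{\'c}'s classical quasi-contraction theorem \cite{Circ}), and the paper's own contributions are only (i) the remark that the shortest path metric is undefined unless $(Y,\kappa)$ is connected, so the theorem requires that hypothesis, and (ii) Theorem~\ref{GH3.4simplified}, which is deduced in one line from part~(a) of Theorem~\ref{GH3.4} together with uniform discreteness. Your proposal supplies exactly what the paper omits: the orbit-diameter argument (each pairwise distance $d(S^ix,S^jx)$ with $i,j\ge 1$ is at most $q\,diam(O_n)$, so the diameter is attained at a pair containing the base point, whence $diam(O_n)\le \frac{1}{1-q}d(x,Sx)$, followed by the geometric decay $d(S^ix,S^{i+k}x)\le q^i\,diam(O_{i+k})$) is {\'C}iri{\'c}'s proof, and your use of Proposition~\ref{eventuallyConst} to replace the completeness and limit-identification steps is sound in this uniformly discrete setting. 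Your closing observation, that convergence is in fact attainment after finitely many iterations, is precisely the content of the paper's Theorem~\ref{GH3.4simplified}, so your argument recovers the paper's strengthening as a byproduct. One small slip: the threshold forcing $d(S^ix,u_1)=0$ is $\frac{q^i}{1-q}\,d(x,Sx)<1$, not $\frac{q^i}{1-q}<1$; you dropped the factor $d(x,Sx)$, which may exceed $1$. Since $d(x,Sx)$ is fixed as $i\to\infty$, this does not affect the conclusion.
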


As noted above, Theorem~\ref{GH3.4}
is only valid when
$(X,\kappa)$ is connected. With the
inclusion of such a hypothesis, we show
below how Theorem~\ref{GH3.4} can
be strengthened.

\begin{thm}
    \label{GH3.4simplified}
    Let $(Y,d,\kappa)$ be a connected digital metric space, where $d$
    is the shortest path metric. Let
    $S: Y \to Y$ be a quasi-contraction.
    Then there exists $u \in Y$ such that \newline
    (a) for every $x \in Y$ there exists
        $n_0 \in \N$ such that
        $i \ge n_0$ implies
        $S^i x = u$; and \newline
    (b) $u$ is the unique fixed point
        of~$S$. 
\end{thm}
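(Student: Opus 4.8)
The plan is to exploit the fact that, since $(Y,\kappa)$ is connected and $d$ is the shortest path metric, $(Y,d)$ is uniformly discrete (as recorded in the Remarks following the definition of a digital metric space); concretely, $d$ takes values in $\N^*$ and any two distinct points of $Y$ are at distance at least $1$. Fix $x \in Y$ and write $x_i = S^i x$, $O(x;n) = \{x_0, \dots, x_n\}$, and $\delta(O(x;n)) = \max\{d(x_i,x_j) : 0 \le i,j \le n\}$. Rather than trying to show that the consecutive distances $d(x_i,x_{i+1})$ decrease --- which can fail for a quasi-contraction once $q \ge 1/2$, and is the reason a naive contraction argument (or a direct appeal to Theorem~\ref{d-under-1-const}) does not suffice --- I would follow \'Ciri\'c's orbit-diameter method and then cash in the integrality of $d$.

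The first step is the key orbital estimate: for $1 \le i,j \le n$, since $x_i = Sx_{i-1}$ and $x_j = Sx_{j-1}$, the quasi-contraction inequality bounds $d(x_i,x_j)$ by $q$ times a maximum of five distances, each between points whose indices lie in $\{0,\dots,n\}$, so $d(x_i,x_j) \le q\,\delta(O(x;n))$. Choosing a pair realizing $\delta(O(x;n))$ and noting that, if the diameter is positive, such a pair must involve $x_0$ (otherwise $\delta(O(x;n)) \le q\,\delta(O(x;n))$ forces it to vanish), a single application of the triangle inequality gives $\delta(O(x;n)) \le d(x_0,x_1) + q\,\delta(O(x;n))$, hence $\delta(O(x;n)) \le d(x,Sx)/(1-q)$ for every $n$. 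Thus the whole orbit $O(x) = \{x_i : i \ge 0\}$ has finite diameter $D_0 \le d(x,Sx)/(1-q)$.

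The second step applies the same estimate to the tails. Setting $D_N = \sup\{d(x_i,x_j) : i,j \ge N\}$ (finite since $D_N \le D_0$), the quasi-contraction inequality applied to $x_i = Sx_{i-1}$ and $x_j = Sx_{j-1}$, now with $i,j \ge N$, shows every such $d(x_i,x_j) \le q\,D_{N-1}$, whence $D_N \le q\,D_{N-1}$ and so $D_N \le q^N D_0$. Because $q^N D_0 \to 0$, for all large $N$ we get $D_N < 1$; since distinct points of $Y$ are at distance at least $1$, this forces $D_N = 0$, i.e. the orbit is eventually constant, equal to some $u_x \in Y$. As $x_{i+1} = S x_i$, the eventual value satisfies $S u_x = u_x$, so $u_x \in \Fix(S)$; this already yields clause (a) for the given $x$ and, in particular, that $\Fix(S) \neq \emptyset$.

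Finally, for uniqueness I would feed two fixed points $u,v$ into the quasi-contraction inequality: since $Su=u$ and $Sv=v$, the five-term maximum collapses to $d(u,v)$, giving $d(u,v) \le q\,d(u,v)$ and hence $u=v$. Therefore every $u_x$ coincides with a single point $u$, which proves clause (b) and upgrades clause (a) to the uniform statement in the theorem. The only delicate point is the non-monotonicity of successive distances noted above, which is exactly what the two orbit-diameter estimates are designed to sidestep; the passage from ``diameter $<1$'' to ``diameter $=0$'' is where the shortest-path (integer-valued) hypothesis does the essential work, replacing the usual Cauchy/completeness conclusion of \'Ciri\'c's theorem (cf.\ Proposition~\ref{eventuallyConst}) by the stronger ``eventually constant.''
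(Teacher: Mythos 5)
Your proof is correct, but it takes a genuinely different route from the paper's. The paper's entire proof is two sentences: it accepts Theorem~\ref{GH3.4} (the \'Ciri\'c-type result of the paper under review, valid once connectedness is added), observes that $(Y,d)$ is uniformly discrete because $d$ is the shortest path metric on a connected image, and then invokes the fact that a convergent sequence in a uniformly discrete space is eventually constant (Proposition~\ref{eventuallyConst}) to upgrade the convergence $\lim_{i\to\infty}S^i x = u$ of Theorem~\ref{GH3.4}(a) to eventual constancy; uniqueness is inherited directly from Theorem~\ref{GH3.4}(b). You instead re-prove the whole statement from scratch: the \'Ciri\'c orbit-diameter bound $\delta(O(x;n)) \le d(x,Sx)/(1-q)$, the geometric decay $D_N \le q^N D_0$ of tail diameters, and then the integrality of the shortest-path metric to force $D_N = 0$ once $D_N < 1$, with the standard two-fixed-point computation for uniqueness. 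Your argument is sound at every step (in particular the claim that a diameter-realizing pair must involve $x_0$, and the collapse from ``tail diameter $<1$'' to ``$=0$'', are handled correctly). What your route buys is self-containedness and a bit more: it does not rest on the correctness of the proof in the criticized paper (or on \'Ciri\'c's original theorem), it dispenses with completeness entirely, and it yields an explicit estimate for $n_0$ via $q^N D_0 < 1$. What the paper's route buys is brevity and a rhetorical point: once uniform discreteness is noticed, the ``strengthening'' is an immediate corollary of the result being critiqued, which is exactly the triviality the paper wishes to expose.
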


\begin{proof}
Since $(Y,d)$ is uniformly discrete,
(a) of Theorem~\ref{GH3.4} implies
$S^i x = u$ for almost all~$i$. (b) follows
immediately.
\end{proof}

\section{$\theta$-contraction assertion
of~\cite{GuptaEtAl}}

The following set of functions $\Theta$ is
defined in~\cite{GuptaEtAl}.
$\theta \in \Theta$ if
$\theta: [0,\infty) \to [0,\infty)$ and
\begin{itemize}
    \item $\theta$ is increasing;
    \item $\theta(0)=0$; and
    \item $t > 0$ implies
          $0 < \theta(t) < \sqrt{t}$.
\end{itemize}

\begin{definition}
    {\rm \cite{GuptaEtAl}}
    Let $(X,d)$ be a metric space.
    Let $T: X \to X$. 
    Let $\theta \in \Theta$.
    If $d(Tx,Ty) \le \theta(d(x,y))$ for
    all $x,y \in X$, $T$ is a {\em digital
    $\theta$-contraction}.
\end{definition}

The following is stated as the main result,
Theorem~3.1, of~\cite{GuptaEtAl}.

\begin{assert}
\label{GuptaAssert}
    Suppose $(X,d,\ell)$ is a digital
    metric space, $\theta \in \Theta$,
    and $T: X \to X$ is a digital
    $\theta$-contraction. Then
    $T$ has a unique fixed point.
\end{assert}

However, the argument offered as
a proof of this assertion is
flawed as follows. A sequence
$\{x_n\}_{n=1}^{\infty} \subset X$
is constructed such that
$\{d(x_{n+1},x_n)\}_{n=1}^{\infty}$
is a strictly decreasing sequence.
The authors conclude that
$x_n = x_{n+1}$ for large~$n$. But
this does not follow, since it
has not been shown that
$\{d(x_{n+1},x_n)\}_{n=1}^{\infty}$
decreases to~0.

We must conclude that
Assertion~\ref{GuptaAssert} is
unproven.

We note the following case, in which
Assertion~\ref{GuptaAssert} reduces to triviality.

\begin{prop}
    Let $(X,d,\kappa)$ be a 
    connected digital metric
    space in which
    \begin{itemize}
        \item $d$ is the shortest path metric, or
        \item $d$ is any $\ell_p$ metric and
              $\kappa = c_1$.
    \end{itemize} 
 Then every $\theta$-contraction
    on $(X,d)$ is a constant map.
\end{prop}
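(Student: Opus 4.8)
The plan is to reduce the statement to Theorem~\ref{d-under-1-const} by taking $f = T$ and $g = \id_X$. First I would observe that $\id_X \in C(X,\kappa)$, so the continuity hypothesis on $g$ in Theorem~\ref{d-under-1-const} holds automatically. The remaining task is then to verify the strict inequality $d(Tx,Ty) < d(\id_X x, \id_X y) = d(x,y)$ in the form actually used by the proof of Theorem~\ref{d-under-1-const}, namely for $\kappa$-adjacent pairs $x \adj_{\kappa} y$.

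Next I would compute $d(x,y)$ for adjacent points under each of the two metric hypotheses. Under the shortest path metric, adjacent points are joined by a path of length~$1$, so $d(x,y) = 1$. Under an $\ell_p$ metric with $\kappa = c_1$, adjacent points differ by exactly~$1$ in exactly one coordinate, so again $d(x,y) = 1$. In either case $d(x,y) = 1 > 0$.

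Then I would invoke the defining properties of $\Theta$. Since $T$ is a $\theta$-contraction, $d(Tx,Ty) \le \theta(d(x,y)) = \theta(1)$, and since $1 > 0$ the membership $\theta \in \Theta$ gives $\theta(1) < \sqrt{1} = 1$. Combining these, $d(Tx,Ty) \le \theta(1) < 1 = d(x,y)$ for every $\kappa$-adjacent pair. Applying Theorem~\ref{d-under-1-const} with $f = T$ and $g = \id_X$ then yields that $T$ is a constant map.

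The step most worth care is the evaluation of the $\Theta$-bound at $t = 1$: the conclusion hinges on the fact that $\theta(t) < \sqrt{t}$ specializes to $\theta(1) < 1$ precisely at the adjacency distance, so that the contraction estimate beats the identity's distance of~$1$. There is no genuine analytic obstacle here; the only subtlety is that the strict inequality required by Theorem~\ref{d-under-1-const} is both needed and available only for adjacent points (at $x=y$ both sides vanish), exactly mirroring the application already made in the proof concerning Assertion~\ref{GH3.1}.
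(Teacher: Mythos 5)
Your proposal is correct and is essentially the paper's own argument: the paper likewise evaluates the contraction at $\kappa$-adjacent points, where $d(x_0,y_0)=1$, obtains $d(Tx_0,Ty_0) \le \theta(d(x_0,y_0)) < \sqrt{d(x_0,y_0)} = 1$, hence $Tx_0=Ty_0$, and then concludes by connectedness of $X$. The only organizational difference is that you route the final propagation step through Theorem~\ref{d-under-1-const} with $g=\id_X$ (exactly as the paper does when treating Assertion~\ref{GH3.1}, including the caveat you correctly note about the strict inequality being needed only at adjacent pairs), whereas the paper's proof of this proposition carries out that connectedness argument directly.
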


\begin{proof}
    Suppose $T$ is a
    $\theta$-contraction on $(X,d)$.
    Given $\kappa$-adjacent
    $x_0,y_0 \in X$,
    \[ d(Tx_0,Ty_0) \le 
    \theta(d(x_0,y_0)) <
    \sqrt{d(x_0,y_0)} = 1. 
    \]
Hence $d(Tx_0,Ty_0)=0$. Since $X$ is 
$\kappa$-connected, the assertion follows.
\end{proof}

\section{\cite{Rani}'s contractions}
We find the following stated as
Theorem~3.1 of~\cite{Rani}.

\begin{assert}
\label{Rani3.1}
        Let $(X,d,k)$ be a complete 
    digital metric space with 
    $k$-adjacency where $d$ is 
    usual Euclidean metric for
    $\Z^n$ and let $f$ and $g$
    be self-mappings on $X$ satisfying the following conditions:
\begin{itemize}
    \item $f(X) \subseteq g(X)$;
     \item $g$ is continuous; and
     \item for some $q$ such that
     \begin{equation}
         \label{Rani3.1Ineq}
          0 < q < 1,
         \mbox{ and for every } x, y \in X,~~
         d(fx, fy) \le q d(gx, gy).
     \end{equation}
\end{itemize}

    Then $f$ and $g$ have a unique 
    common fixed point in $X$ provided $f$ and $g$ commute.
\end{assert}

Notes:
\begin{itemize}
    \item There is an 
        error, perhaps a typo, in the argument given as a
        proof for this assertion:
        \[``d(y_n,y_m) \to 1" \mbox{ should be }
        ``d(y_n,y_m) \to 0".
        \]
    \item The continuity assumed in
    (3.2) is of the $\varepsilon - \delta$ variety of analysis.
    Since $d$ is the Euclidean
    metric in $\Z^n$, all self-maps
    on the uniformly discrete
    $(\Z^n,d)$ or any of its subsets
    have this continuity. Therefore,
    we can improve on the assertion
    of~\cite{Rani} as follows.
\end{itemize}

 \begin{thm}
 \label{betterRani3.1}
         Let $(X,d)$ be a 
         uniformly discrete metric
         space and let $f$ and $g$
    be self-mappings on $X$ 
    such that:
\begin{itemize}
    \item $f(X) \subseteq g(X)$;
    \item For some $q$ such that
    $0 < q < 1$ and every 
       $x, y \in X$,
    $d(fx, fy) \le
       q d(gx, gy)$.
    \item $f$ and $g$ commute.
\end{itemize}
Then $f$ and $g$ have a unique 
    common fixed point in $X$.
 \end{thm}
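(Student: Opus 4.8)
The plan is to run the standard Jungck iteration for a pair of maps satisfying $f(X) \subseteq g(X)$, using uniform discreteness in place of the completeness-and-continuity machinery that the original argument of~\cite{Rani} invokes. First I would build the sequence: fix any $x_0 \in X$; since $f(X) \subseteq g(X)$, choose $x_1$ with $g(x_1) = f(x_0)$, and inductively choose $x_{n+1}$ with $g(x_{n+1}) = f(x_n)$. Writing $y_n = f(x_n) = g(x_{n+1})$, so that $g(x_n) = y_{n-1}$ for $n \ge 1$, the contraction hypothesis gives, for $n \ge 1$,
\[ d(y_n, y_{n+1}) = d(f x_n, f x_{n+1}) \le q\, d(g x_n, g x_{n+1}) = q\, d(y_{n-1}, y_n), \]
so by induction $d(y_n, y_{n+1}) \le q^n\, d(y_0, y_1)$.

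Since $0 < q < 1$, this upper bound tends to $0$; as $(X,d)$ is uniformly discrete with separation constant $\delta > 0$, once $q^n d(y_0,y_1) < \delta$ we must have $y_n = y_{n+1}$. Hence $\{y_n\}$ is eventually constant, say $y_n = z$ for all $n \ge N$; this is exactly the mechanism of Proposition~\ref{eventuallyConst}, and note that completeness per se is never needed, only eventual constancy. Because $y_n = f(x_n)$ and $y_n = g(x_{n+1})$, for $m$ large enough we have both $f(x_m) = z$ and $g(x_m) = z$. Taking such an $m$ and setting $w = x_m$ produces a coincidence point: $f(w) = g(w) = z$.

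To promote $z$ to a common fixed point I would use commutativity together with one more application of the contraction. From $f(w) = g(w) = z$ and $fg = gf$ we get $f(z) = f(g(w)) = g(f(w)) = g(z)$; write $t := f(z) = g(z)$. Applying the contraction inequality to the pair $(z,w)$ gives
\[ d(t, z) = d(f z, f w) \le q\, d(g z, g w) = q\, d(t, z), \]
and since $0 < q < 1$ this forces $d(t,z) = 0$, i.e.\ $t = z$. Thus $f(z) = g(z) = z$, so $z$ is a common fixed point. Uniqueness is then immediate: if $z'$ were another common fixed point, the contraction applied to $(z, z')$ would yield $d(z,z') \le q\, d(z,z')$, whence $z = z'$.

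The only genuinely delicate point is recognizing that the $\varepsilon$--$\delta$ continuity of $g$ assumed in~\cite{Rani} plays no role here: uniform discreteness alone forces the iterates to stabilize, and the passage from coincidence point to common fixed point requires only commutativity and one reuse of the contraction, not continuity. I would therefore expect the bulk of the work to be bookkeeping --- keeping the index shifts $g(x_n) = y_{n-1}$ and $g(x_{n+1}) = y_n$ straight so that the telescoping bound $d(y_n,y_{n+1}) \le q^n\, d(y_0,y_1)$ is stated correctly --- rather than any analytic obstacle.
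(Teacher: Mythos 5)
Your proposal is correct and follows essentially the same route as the paper's proof: the Jungck iteration $fx_n = gx_{n+1}$, the bound $d(y_n,y_{n+1}) \le q^n d(y_0,y_1)$, eventual constancy via uniform discreteness, and then commutativity plus one more application of the contraction, finishing with the identical uniqueness argument. The only cosmetic difference is the order of the final step --- you first establish $fz = gz$ via the coincidence point $w$ and then show this common value equals $z$, whereas the paper shows $gz = z$ first and then $fz = z$ --- but the ingredients are the same.
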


\begin{proof}
We use ideas from~\cite{Rani}.
Let $x_0 \in X$. Since
$f(X) \subseteq g(X)$, take
$x_1$ such that $fx_0 = gx_1$,
and inductively,
$fx_n = gx_{n+1}$. Then
\[ d(fx_n,fx_{n+1}) \le
   q d(gx_n,gx_{n+1})
   = q d(fx_{n-1},fx_n).\]
An easy induction yields that
\[ d(fx_n, fx_{n+1}) \le
q^n d(fx_0, fx_1)
\to_{n \to \infty} 0.
\]
Thus, there exists $z \in X$
such that for almost all~$m,n$,
\[gx_{m+1} = fx_m = fx_n = gx_{n+1}=z.
\]
Using the uniformly discrete and commutative 
properties, for almost all n,
\[ d(gz,z) = d(gfx_n, fx_n) =
d(fgx_n,fx_n) \le
q d(ggx_n,gx_n) = q d(gz,z).
\]
Thus, $d(gz,z) = 0$, so $z$ is a
fixed point of $g$.

Then
\[ d(fz,z) = d(fz,fx_n) \le
   q d(gz,gx_n) = q d(gz,z) = 0,
\]
so $z$ is a common fixed point
of $f$ and $g$.

To show the uniqueness of $z$,
suppose $z_1$ is a common fixed
point of $f$ and $g$. Then
\[ d(z,z_1) = d(fz,fz_1) \le 
   q d(gz,gz_1) = q d(z,z_1),
\]
which implies $d(z,z_1)=0$, i.e., $z = z_1$.
\end{proof}

The following provides an important case in which
Theorem~\ref{betterRani3.1}, and therefore
Assertion~\ref{Rani3.1}, reduce to triviality.

\begin{prop}
    Let $(X,d,\kappa)$ be a connected digital 
    metric space, where 
    \begin{itemize}
      \item $d$ is the shortest path metric, or
        \item $\kappa = c_1$ and $d$ is an $\ell_p$
              metric.
    \end{itemize}
    Suppose $f$ and $g$ are maps 
    satisfying~(\ref{Rani3.1Ineq}).
    If $g$ is $\kappa$-continuous, then
    $f$ is a constant function.
\end{prop}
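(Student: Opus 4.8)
The plan is to prove that $f$ collapses every $\kappa$-adjacent pair of points and then to propagate this collapsing along $\kappa$-paths using connectedness, exactly in the spirit of the proof of Theorem~\ref{d-under-1-const}. First I would take $x \adj_{\kappa} y$. Since $g \in C(X,\kappa)$, continuity gives $g(x) \adjeq_{\kappa} g(y)$, and in each of the two admissible metric regimes — the shortest path metric, or an $\ell_p$ metric with $\kappa = c_1$ — adjacent-or-equal points lie at distance at most $1$, so $d(gx,gy) \le 1$. Substituting into the contraction hypothesis~(\ref{Rani3.1Ineq}) then yields
\[ d(fx,fy) \le q\, d(gx,gy) \le q < 1. \]

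Next I would argue that $d(fx,fy) < 1$ forces $fx = fy$; this is where the integer structure of $X \subseteq \Z^n$ enters. In the $\ell_p$ setting, two distinct points of $X$ differ by at least $1$ in some coordinate, so their $\ell_p$ distance is at least $1$; in the shortest path setting, the distance between distinct (connected) points is a positive integer, again at least $1$. Hence the strict bound above leaves no room for $fx \neq fy$. Finally I would invoke connectedness: given arbitrary $a,b \in X$, choose a $\kappa$-path $\{x_i\}_{i=0}^{n}$ from $a$ to $b$; the previous step gives $f(x_i) = f(x_{i+1})$ for each $i$, whence $f(a) = f(b)$, so $f$ is constant.

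The one subtlety — and the reason I would re-run this short argument rather than cite Theorem~\ref{d-under-1-const} as a black box — is that that theorem demands the strict inequality $d(fx,fy) < d(gx,gy)$ for \emph{all} $x,y$, whereas~(\ref{Rani3.1Ineq}) only supplies $d(fx,fy) \le q\, d(gx,gy)$. At pairs with $d(gx,gy)=0$ (in particular $x=y$, or an adjacent pair that $g$ collapses) the strict inequality fails. I expect this to be the only real obstacle, and it is harmless: at such a pair the bound already reads $d(fx,fy) \le 0$, which forces $fx = fy$ directly. So the degenerate case need only be acknowledged, since it merely reinforces the conclusion that adjacent points have equal image.
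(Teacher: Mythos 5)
Your proposal is correct and follows essentially the same route as the paper: bound $d(gx,gy)\le 1$ for adjacent points via continuity and the choice of metric, deduce $d(fx,fy)\le q<1$ from~(\ref{Rani3.1Ineq}), conclude $fx=fy$ by discreteness of the metric, and propagate along paths by connectedness. Your observation about not citing Theorem~\ref{d-under-1-const} as a black box is also consistent with the paper, which likewise invokes only the connectedness argument from that theorem's \emph{proof} rather than the theorem itself.
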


\begin{proof}
    Let $x \adj_{\kappa} y$ in $X$.
    By~(\ref{Rani3.1Ineq}), continuity,
    and our choices of $d$ and $\kappa$,
    \[ d(fx,fy) \le qd(gx,gy) \le q < 1.
    \]
    Hence $fx=fy$. Since $X$ is
    connected, it follows as in the
    proof of 
    Theorem~\ref{d-under-1-const}
    that $f$ is constant.
\end{proof}

\section{Weakly compatible mappings in~\cite{SalJha}}
Several papers, including~\cite{SalJha},
attribute the following definition to
alleged sources that do not contain it
or, by virtue of their own citations,
clearly are not the source.

\begin{definition}
Let $(X,d)$ be a metric space
and $f,g: X \to X$. We say
$f$ and $g$ are {\em weakly
compatible} if they commute
at coincidence points, i.e.,
$f(x)=g(x)$ implies
$f(g(x)) = g(f(x))$.
\end{definition}

The following is stated as
Proposition~2.15
of~\cite{SalJha}.

\begin{assert}
\label{SJ2.15}
    Let $J,K: F \to F$ be
    weakly compatible maps.
    If a point~$\eta$ is a 
    unique point of
coincidence of mappings
$J$ and $K$, i.e.,
$J(\sigma)=K(\sigma)=\eta$,
then $\eta$ is the unique 
common fixed point of $J$ 
and $K$.
\end{assert}

The argument given to prove
Assertion~\ref{SJ2.15}
claims that 
$J(\sigma)=K(\sigma)=\eta$ 
implies $J(\sigma)=J(K(\sigma))$,
and $K(J(\sigma))=K(\sigma)$.
No reason is given to support
the latter equations, and there
is no obvious reason to
accept them. Therefore,
we must regard 
Assertion~\ref{SJ2.15} as
unproven.

The following is stated as
Theorem~3.1 of~\cite{SalJha}.

\begin{assert}
    \label{SJ3.1}
    Let $(F,d,Y)$ be a
    digital metric space, where
    $Y$ is an adjacency and
    $d$ is the
    Euclidean metric
    on~$\Z^n$. Let
    $J,K: F \to F$
    such that
    \begin{itemize}
        \item $J(F) \subset K(F)$, and
        \item for some $\xi$
        such that $0 < \xi < 1/4$,
        \[ d(Ju,Jq) \le
        \xi [d(Ju,Kq) +
         d(Jq,Ku)+
         d(Ju,Ku) +
         d(Jq,Kq)]~ \forall
         u,q \in F.
        \]
    \end{itemize}
    If $K(F)$ is complete
    and $J$ and $F$ are
    weakly compatible, then
    there exists a unique common fixed point in
    $F$ for $J$ and $K$.   
\end{assert}

The argument offered as
proof of 
Assertion~\ref{SJ3.1}
depends on
Assertion~\ref{SJ2.15},
which, we have shown above,
is unproven. Thus, we must
regard Assertion~\ref{SJ3.1}
as unproven.

\begin{remarks}
    Example~3.2 of~\cite{SalJha}
    asks us to consider as a digital
    metric space $(F,\phi,Y)$ where
    $F = [0,1]$, and the function
    $J: F \to F$ given by
    $J(u)= \frac{1}{1+u}$. But
    $F$ clearly is not a subset of
    any~$\Z^n$, and $J$ is not
    integer-valued.
\end{remarks}

\section{\cite{Sug}'s common fixed
point assertions}
\subsection{\cite{Sug}'s Theorem 3.1}
The following is stated
as Theorem~3.1
of~\cite{Sug}.

\begin{assert}
\label{Sug3.1}
    Let $(X,\ell,d)$ be a
    digital metric space.
    Let $A,B: X \to X$
    with $B(X) \subset A(X)$.
    Let $\gamma$ be a
    right continuous real
    function such that
    $\gamma(a) < a$ for
    $a > 0$. Suppose for
    all $x,y \in X$ we
    have
    \[ d(Bx,By) \le 
       \gamma(d(Ax,Ay)).\]
    Then $A$ and $B$ have a unique common fixed point.
\end{assert}

That Assertion~\ref{Sug3.1}
is incorrect is shown by the following.

\begin{exl}
    Let $X = \N$ and let
    $d(x,y) = ~ \mid x - y \mid$. Let $A(x)=x+1$,
    $B(x)=2$, $\gamma(x) = x/2$ for all
    $x \in \N$. Clearly,
    the hypotheses of
    Assertion~\ref{Sug3.1}
    are satisfied, but $A$
    has no fixed point.
\end{exl}

\subsection{\cite{Sug}'s Theorem 3.2}

\begin{definition}
    (Incorrectly attributed in~\cite{Sug}
     to~\cite{Rosenf79}; found 
     in~\cite{Al-ThagEtAl})

     Let $f,g: X \to X$ be functions.
     If there is a coincidence point $x_0$
     of $f$ and $g$ at which $f$ and $g$
     commute (i.e., $f(x_0)=g(x_0)$ and
     $f(g(x_0)) = g(f(x_0))$, then
     $f$ and $g$ are {\em occasionally
     weakly compatible}.
\end{definition}

Let $\Phi$ be the set of functions
$\phi: [0,\infty) \to [0,\infty)$ such
that $\phi$ is increasing, $\phi(t) < t$
for $t > 0$, and $\phi(0)=0$.

\begin{definition}
    {\rm \cite{SrideviEtAl}}
    Let $(X,d)$ be a metric space.
    Let $\alpha: X \times X \to [0,\infty)$.
    Let $\phi, \psi \in \Phi$. Let
    $T: X \to X$. If for
    all $x,y \in X$ we have
    \[ \alpha(x,y) \psi(d(Tx,Ty)) \le
      \psi(d(Tx,Ty)) - \phi(d(Tx,Ty))
    \]
    then $T$ is an $\alpha - \psi - \phi$
    {\em contractive mapping}.
\end{definition}

\begin{definition}
    {\rm \cite{SametEtAl}}
    \label{alphaAdmiss}
    Let $(X,d)$ be a metric space.
    Let $T: X \to X$ and
    $\alpha: X \times X \to [0,\infty)$.
    We say $T$ is {\em $\alpha$-admissible}
    if $\alpha(x,y) \ge 1$ implies
    $\alpha(Tx,Ty) \ge 1$.
\end{definition}

The following is stated as
Theorem~3.2 of~\cite{Sug}.

\begin{assert}
    \label{Sug3.2}
    Let $(X,\ell,d)$ be a digital metric
    space. Let $S,T,A$, and $B$ be
    $\alpha - \psi - \phi$
    contractive mappings of $(X,d)$.
    Let the pairs $(A, S)$ and
$(B, T)$ be occasionally weakly compatible.
Suppose for all $x,y \in X$,
\[ \alpha(x,y) \psi(d(Ax,By)) \le
   \psi(M(x,y)) - \gamma(M(x,y))
\]
    where
\[ M(x,y) = \max \left \{ \begin{array}{c}
    d(Sx,Ty),~ d(By,Sx),~d(Sx,Ax),~d(By,Ty),~
    d(Ax,Ty), \\
    \frac{2d(Sx,Ax)}{1+d(By,Ty)}
   \end{array} \right \}
\]
Then there is a unique fixed point of
$S,T,A$, and $B$.
\end{assert}

The argument offered as proof of
Assertion~\ref{Sug3.2} in~\cite{Sug}
is marred by the following flaws.
\begin{itemize}
    \item The ``$\gamma$" in the inequality
        should be ``$\phi$".
    \item The argument starts:
    \begin{quote}
        Since $A,B$ are $\alpha$-admissible
        then for all $x,y \in X$,
        $\alpha(x,y) \ge 1$.        
    \end{quote}
    Notice it was not hypothesized that
    $A$ and $B$ are $\alpha$-admissible.
    Even if this is a mere omission,
    the conclusion would be unsupported; it
    does not follow from 
    Definition~\ref{alphaAdmiss}.
    The unproven allegation that
$\alpha(x,y) \ge 1$ is part of the
argument that $Ax=Sx=By=Ty$, which in
turn is an important part of the
uniqueness argument.
\item No proof is offered for the
      claim of a fixed point for any
      of $S,T,A$, and $B$.
\end{itemize}
We must conclude that Assertion~\ref{Sug3.2}
is unproven.

\subsection{\cite{Sug}'s Example 3.2}
Example~3.2 of~\cite{Sug} wants us
to consider the digital metric space
$(\N,d,4)$, where 
$d(x,y) =~ \mid x-y \mid$. The
paper fails to define 
4-adjacency on~$\N$;
perhaps 2-adjacency was
intended.

Further flaws:
\begin{itemize}
    \item It is claimed that functions $A$, $B$,
    $S$, and $T$ have a common fixed point, where
    \[Ax = x + 1,~~ By = y + 1, ~~ Sx = x - 1,~~
    Ty = y - 1 .
    \]
    But clearly none of these functions has
    a fixed point.
    \item We are asked to consider
an inequality that uses functions
$\psi,~ \alpha$, and $\varphi$ that are
not defined.
\end{itemize}

\subsection{\cite{Sug}'s $M_6$}
In~\cite{Sug}, $M_6$ is defined as the set
of real-valued continuous functions
$\phi: [0,1]^6 \to \R$ such that

(A) $\int_0^{\phi(u.u.0,0,u,u)} \varphi(t) dt \le 0$
    implies $u \ge 0$.

(B) $\int_0^{\phi(u.u.0,0,u,0)} \varphi(t) dt \le 0$
    implies $u \ge 0$.

(C) $\int_0^{\phi(0,u.u.0,0,u)} \varphi(t) dt \le 0$
    implies $u \ge 0$.

Notice the use of two different ``phi" 
symbols, ``$\phi$" and ``$\varphi$".
\begin{itemize}
    \item If this is intended, ``$\varphi$"
          is undefined.
    \item If it is intended that ``$\varphi$"
          should be ``$\phi$", then every
          continuous function
$\phi: [0,1]^6 \to \R$ belongs to $M_6$,
since the domain of such a function
requires $u$, as a parameter of $\phi$,
to be nonnegative.          
\end{itemize}

Example~3.3 of~\cite{Sug} claims
a certain function~$\phi$ satisfies
(A) and (B) of the definition of~$M_6$
and therefore belongs to~$M_6$,
although no claim is made that
$\phi$ satisfies (C).

\section{\cite{TiwariEtAl}'s common fixed point assertions}
The paper~\cite{TiwariEtAl}
presents five assertions concerning
pairs of various types of expansive 
self-mappings on digital images.
Each of these assertions concludes
that the maps of the pair
have common fixed points.
We show below that all of these
assertions, ``usually"
or always, reduce to
triviality: further, three of them must be
regarded as unproven in full generality due to errors
in their ``proofs".

We note that~\cite{TiwariEtAl}
uses ``$\alpha$-adjacent" for
what we have been calling
``$c_{\alpha}$-adjacent".

\subsection{\cite{TiwariEtAl}
Theorem 3.1}
\label{TiwariSec1}
The assertion labeled
in~\cite{TiwariEtAl} as
Theorem~3.3.1 is clearly
intended to be labeled
Theorem~3.1. It is stated
as follows.
\begin{thm}
    {\rm \cite{TiwariEtAl}}
    \label{Tiwari3.1}
    Let $(X,d,k)$ be a 
    complete digital metric 
    space and suppose
    $T_1,T_2: X \to X$ are
    continuous, onto mappings
    satisfying
    \begin{equation}
    \label{Tiwari3.1ineq}
        d(T_1x,T_2y) \ge
       \alpha d(x,y) +
       \beta [d(x,T_1x) + d(y,T_2y)]
    \end{equation} 
    for all $x,y \in X$,
    where $\alpha > 0$,
    $1/2 \le \beta \le 1$, and
    $\alpha + \beta > 1$. Then
    $T_1$ and $T_2$ have a 
    common fixed point in~$X$.
\end{thm}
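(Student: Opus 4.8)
The plan is to treat Theorem~\ref{Tiwari3.1} as a surjective-expansion fixed point result and to exploit the surjectivity of $T_1$ and $T_2$ by running the iteration \emph{backwards}. Since the displayed inequality bounds $d(T_1x,T_2y)$ from below rather than above, the contraction we need appears not on forward orbits but on a sequence of preimages. Concretely, starting from an arbitrary $y_0 \in X$, I would use surjectivity to choose $y_1$ with $T_1 y_1 = y_0$, then $y_2$ with $T_2 y_2 = y_1$, and inductively pick $y_{2k+1}$ with $T_1 y_{2k+1} = y_{2k}$ and $y_{2k+2}$ with $T_2 y_{2k+2} = y_{2k+1}$.

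Writing $a_n = d(y_n,y_{n+1})$, the key step is to feed consecutive terms of this sequence into the hypothesis. Applying~(\ref{Tiwari3.1ineq}) with $x=y_{2n+1}$, $y=y_{2n+2}$ turns the left side into $d(y_{2n},y_{2n+1})=a_{2n}$ and, after collecting the $d(\cdot,T_i\cdot)$ terms, yields $a_{2n} \ge \beta a_{2n} + (\alpha+\beta)a_{2n+1}$; applying it with $x=y_{2n+3}$, $y=y_{2n+2}$ gives the analogous estimate shifted by one index. In both cases I obtain $a_{n+1} \le h\,a_n$ with $h = (1-\beta)/(\alpha+\beta)$. Since $\beta \ge 1/2$ forces $1-\beta \le 1/2$, while $\alpha>0$ together with $\beta \ge 1/2$ gives $\alpha+2\beta>1$, i.e.\ $\alpha+\beta > 1-\beta$, we have $0 \le h < 1$. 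Hence $a_n \le h^n a_0 \to 0$ and $\{y_n\}$ is Cauchy.

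By completeness, $y_n \to z$ for some $z \in X$. I would then read off both fixed point equations from continuity: since $T_1 y_{2n+1} = y_{2n} \to z$ while $y_{2n+1} \to z$, continuity of $T_1$ gives $T_1 z = z$, and symmetrically $T_2 y_{2n+2} = y_{2n+1} \to z$ with $y_{2n+2} \to z$ gives $T_2 z = z$; thus $z$ is a common fixed point. I would not attempt uniqueness, since testing~(\ref{Tiwari3.1ineq}) on two common fixed points $z,w$ yields only $(1-\alpha)d(z,w) \ge 0$, which is vacuous when $\alpha \le 1$, and the theorem in any case claims only existence.

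The step I expect to be the genuine obstacle is the passage to the limit, i.e.\ the exact role of ``continuity.'' In the general metric setting it is precisely sequential continuity of $T_1,T_2$ that the argument consumes, and one must be careful that here ``continuous'' means metric continuity rather than merely digital continuity. In the uniformly discrete setting these papers actually inhabit, however, this obstacle evaporates: $a_n \to 0$ forces $a_n = 0$ for large $n$ by Proposition~\ref{eventuallyConst}, so $\{y_n\}$ is eventually constant at $z$, and the relations $T_1 y_{2n+1} = y_{2n}$ and $T_2 y_{2n+2} = y_{2n+1}$ give $T_1 z = z = T_2 z$ directly, with no appeal to continuity. This same observation is what drives the theorem toward triviality, which is presumably the direction the surrounding critique will take.
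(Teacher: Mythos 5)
Your proof is correct: the backward-orbit construction via surjectivity, the estimate $a_{n+1} \le h\,a_n$ with $h = (1-\beta)/(\alpha+\beta) \in [0,1)$ (which indeed only needs $\alpha>0$, $1/2 \le \beta \le 1$, not $\alpha+\beta>1$), the Cauchy/completeness step, and your observation that uniform discreteness lets you dispense with continuity are all sound. But it is a genuinely different route from the paper's. The paper never iterates at all: this statement is quoted from \cite{TiwariEtAl} only to be dismantled, and the paper's entire treatment of it is Proposition~\ref{Tiwari3.1trivialize}, which uses surjectivity exactly once, at a single point. Given $x_0 \in X$, choose $y_0$ with $T_2 y_0 = T_1 x_0$; then the left side of~(\ref{Tiwari3.1ineq}) for the pair $(x_0,y_0)$ is zero, so each nonnegative, positively weighted term on the right must vanish, giving $x_0 = y_0$, $T_1 x_0 = x_0$, and $T_2 x_0 = x_0$. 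Since $x_0$ was arbitrary, $T_1 = T_2 = \id_X$, and the conclusion holds vacuously --- with no completeness, no continuity, no Cauchy sequence, and no use of the constant constraints beyond positivity. So the comparison is this: your argument is the ``honest'' fixed-point proof (essentially what the original authors intended) and would have content if the hypotheses admitted non-identity maps; the paper's argument reveals that they do not, which is the whole point of its critique. Note in particular that your closing guess about ``the direction the surrounding critique will take'' undershoots: you predicted triviality via uniform discreteness (eventually constant sequences), whereas the paper's triviality is stronger and holds in an arbitrary metric space --- inequality~(\ref{Tiwari3.1ineq}) together with ontoness of $T_2$ alone forces both maps to be the identity, so every point of $X$ is a common fixed point before any analysis begins.
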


See section~\ref{modificationToTriviality}
for discussion of the triviality of this
assertion.

\subsection{\cite{TiwariEtAl}
Theorem 3.2}
Theorem~3.2 of~\cite{TiwariEtAl}
is stated as follows.

\begin{thm}
    {\rm \cite{TiwariEtAl}}
    \label{Tiwari3.2}
    Let $(X,d,k)$ be a 
    complete digital metric 
    space and suppose
    $T_1,T_2: X \to X$ are
    continuous, onto mappings
    satisfying
    \begin{equation}
    \label{Tiwari3.2ineq}
        d(T_1x,T_2y) \ge
       \alpha d(x,y) +
       \beta [d(x,T_2y) + d(y,T_1x)]
    \end{equation} 
    for all $x,y \in X$,
    where $\alpha > 0$,
    $1/2 \le \beta \le 1$, and
    $\alpha + \beta > 1$. Then
    $T_1$ and $T_2$ have a 
    common fixed point in~$X$.
\end{thm}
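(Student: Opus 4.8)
The plan is to bypass entirely the elaborate backward‐sequence construction that~\cite{TiwariEtAl} uses, and instead to exploit the ontoness of $T_1$ and $T_2$ to collapse the left-hand side of~(\ref{Tiwari3.2ineq}) to zero. First I would fix an arbitrary $a \in X$. Since $T_1$ is onto there is some $x \in X$ with $T_1 x = a$, and since $T_2$ is onto there is some $y \in X$ with $T_2 y = a$. Substituting this particular pair $(x,y)$ into~(\ref{Tiwari3.2ineq}) makes the left side $d(T_1 x, T_2 y) = d(a,a) = 0$, while the right side becomes $\alpha\, d(x,y) + \beta[d(x,a) + d(y,a)]$.

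The key step is then purely arithmetic. The right side is a sum of nonnegative terms whose coefficients $\alpha$ and $\beta$ are strictly positive, so $0 \ge \alpha\, d(x,y) + \beta[d(x,a)+d(y,a)]$ forces each summand to vanish. In particular $d(x,a)=0$ and $d(y,a)=0$, whence $x=y=a$ and therefore $T_1 a = a$ and $T_2 a = a$. As $a$ was arbitrary, this yields $T_1 = T_2 = \id_X$.

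Finally I would feed this conclusion back into~(\ref{Tiwari3.2ineq}): with $T_1 = T_2 = \id_X$ the inequality reads $d(x,y) \ge (\alpha + 2\beta)\, d(x,y)$ for all $x,y \in X$. If $X$ contained two distinct points, dividing by $d(x,y) > 0$ would give $1 \ge \alpha + 2\beta$, contradicting $\alpha > 0$ and $\beta \ge 1/2$. Hence the hypotheses are satisfiable only when $X$ is a single point, where a common fixed point exists trivially; and when $\lvert X\rvert \ge 2$ the theorem is vacuously true because no admissible pair $T_1, T_2$ exists. This is the precise sense in which the assertion reduces to triviality, and I would record it as a proposition rather than as a substantive fixed-point theorem.

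I do not anticipate a genuine obstacle; the only ``difficulty'' is resisting the urge to imitate~\cite{TiwariEtAl} by building a backward sequence $\{x_n\}$ through the onto maps and trying to prove it Cauchy. That route leads to the recurrence $a_n \ge \alpha\, a_{n+1} + \beta\, d(x_n,x_{n+2})$ on the distances $a_n = d(x_n,x_{n+1})$, which delivers a contraction factor below $1$ only when $\alpha > 1$, not merely when $\alpha + \beta > 1$; it is exactly at the unjustified assertion that this sequence is Cauchy that the published argument fails. Recognizing that ontoness alone already forces $T_1 = T_2 = \id_X$ sidesteps the whole difficulty, and the same two lines apply verbatim to Theorem~\ref{Tiwari3.1}, the only difference being that the re-substitution there produces the constraint $\alpha \le 1$ rather than an outright contradiction.
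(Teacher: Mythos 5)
Your proof is correct, and its engine is exactly the paper's: the paper does not prove Theorem~\ref{Tiwari3.2} directly but refers it to Proposition~\ref{Tiwari3.1trivialize}, whose proof is your first two steps --- use surjectivity to manufacture a pair at which the left side of~(\ref{Tiwari3.2ineq}) equals $0$, conclude that each positively-weighted nonnegative term on the right vanishes, and deduce $T_1 = T_2 = \id_X$, so that every point is a common fixed point. (A cosmetic difference: the paper uses only that $T_2$ is onto, choosing $y_0$ with $T_2y_0 = T_1x_0$, whereas you invert both maps at a common point $a$; either works.) Where you genuinely go beyond the paper is the re-substitution step. Proposition~\ref{Tiwari3.1trivialize} treats all five inequalities of~\cite{TiwariEtAl} uniformly, retaining only the term $\alpha d(x,y)$ after setting $T_1 = T_2 = \id_X$, and so it concludes that $X$ is a singleton only under the additional hypothesis $\alpha > 1$. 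For inequality~(\ref{Tiwari3.2ineq}) specifically, as you observe, the $\beta$-terms do \emph{not} vanish under the identity substitution --- they become $2\beta\, d(x,y)$ --- so two distinct points would force $1 \ge \alpha + 2\beta$, contradicting $\alpha > 0$ and $\beta \ge 1/2$ outright. Your argument therefore yields a strictly sharper triviality statement than the paper records for this theorem: its hypotheses are satisfiable only on a one-point space, with no condition on $\alpha$ beyond positivity. One caveat about your closing commentary: the paper labels Theorems~\ref{Tiwari3.1} and~\ref{Tiwari3.2} as theorems (reserving the verdict ``unproven'' for Assertions~\ref{Tiwari3.3}--\ref{Tiwari3.5}), so your claim that the published backward-sequence argument for this result fails at the Cauchy step is your own assessment, neither needed for your proof nor corroborated by the paper.
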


See section~\ref{modificationToTriviality}
for discussion of the triviality of this
assertion.

\subsection{\cite{TiwariEtAl}
``Theorem" 3.3}
The following is stated as Theorem~3.3 
of~\cite{TiwariEtAl}.

\begin{assert}
    {\rm \cite{TiwariEtAl}}
    \label{Tiwari3.3}
    Let $(X,d,k)$ be a 
    complete digital metric 
    space and suppose
    $T_1,T_2: X \to X$ are
    continuous, onto mappings
    satisfying
    \begin{equation}
    \label{Tiwari3.3ineq}
        d(T_1x,T_2y) \ge
       \alpha d(x,y) +
       \beta d(x,T_1x) + 
       \gamma d(y,T_2y) +
       \eta [d(x,T_1x) + d(y,T_2y) ]
    \end{equation}
    for all $x,y \in X$,
    where
    \[ \alpha \ge -1,~~~
    \beta > 0,~~~ \gamma \le 1/2,
    ~~~1/2 < \eta \le 1, ~~~\mbox{and}
    ~~~\alpha + \beta + \gamma + \eta > 1.
    \]
    Then
    $T_1$ and $T_2$ have a 
    common fixed point in~$X$.
\end{assert}

The argument offered as proof of
Assertion~\ref{Tiwari3.3} creates
a sequence $\{x_n\}_{n=1}^{\infty}$, 
reaches an inequality
\[ [1 - (\beta + \eta)]d(x_{2n},x_{2n+1})
\ge (\alpha + \gamma + \eta)d(x_{2n+1},x_{2n+2})
\]
and claims to derive that
\[    d(x_{2n},x_{2n+1}) \ge
\frac{\alpha + \gamma + \eta}{1 - (\beta + \eta)} d(x_{2n+1},x_{2n+2}).
\]
This reasoning would be correct if we
knew that $1 - (\beta + \eta) > 0$;
however, we don't have such knowledge,
since the hypotheses
allow $1 - (\beta + \eta) \le 0$. 

Thus, we must consider
Assertion~\ref{Tiwari3.3}, as written, unproven.

See section~\ref{modificationToTriviality}
for discussion of the triviality of this
assertion.

\subsection{\cite{TiwariEtAl}
``Theorem" 3.4}

The following is stated as Theorem~3.4
of~\cite{TiwariEtAl}.

\begin{assert}
  {\rm \cite{TiwariEtAl}}
    \label{Tiwari3.4}
    Let $(X,d,k)$ be a 
    complete digital metric 
    space and suppose
    $T_1,T_2: X \to X$ are
    continuous, onto mappings
    satisfying
    \begin{equation}
    \label{Tiwari3.4ineq}
        d(T_1x, T_2y) \ge
       \alpha [d(x,y) + d(x,T_1x) + d(y,T_2y)] +
       [\beta d(x,T_2y) + d(y,T_1x)]
    \end{equation}
    for all $x,y \in X$, where
    \[ \alpha \ge 0,~~ \beta <1,~~
       \alpha + \beta > 1.\]
    Then $T_1$ and $T_2$ have a common
    fixed point.
\end{assert}

It seems likely that
``$[\beta d(x,T_2y)$" is intended to
be ``$\beta [d(x,T_2y)$".

The ``proof" of Assertion~\ref{Tiwari3.4}
in~\cite{TiwariEtAl} contains errors
similar to those in the ``proof" 
of Assertion~\ref{Tiwari3.3}. We discuss one
of these errors.

The argument creates a sequence 
$\{x_n\}_{n=1}^{\infty}$,
reaches the inequality
\[ [1 - (\alpha + \beta)] d(x_{2n},x_{2n+1}) \ge
(2 \alpha + 2 \beta) d(x_{2n+1}, x_{2n+2})
\]
and claims to derive from it
\[ d(x_{2n},x_{2n+1}) \ge
  \frac{2 \alpha + 2 \beta}{1-(\alpha + \beta)}
  d(x_{2n+1},x_{2n+2})
\]
which does not follow, since by
hypothesis, the denominator of the
fraction is negative.

Thus we must regard
Assertion~\ref{Tiwari3.4} as
unproven.

See section~\ref{modificationToTriviality}
for discussion of the triviality of this
assertion.

\subsection{\cite{TiwariEtAl}
``Theorem" 3.5}
The following is stated as Theorem~3.5
of~\cite{TiwariEtAl}.

\begin{assert}
    \label{Tiwari3.5}
     Let $(X,d)$ be a complete metric 
    space and suppose
    $T_1,T_2: X \to X$ are continuous
    onto mappings
    satisfying \newline \newline
    $d(T_1x,T_2y) \ge$
    \begin{equation}
    \label{Tiwari3.5ineq}    
    \alpha \max \{ d(x,y), 
      d(x,T_1x), d(y,T_2y)\} +
      \beta \max \{ d(x,T_2y), d(x,y) \}
      + \gamma d(x,y)
     \end{equation}
    where 
    \[ \alpha \ge 0,~~~\beta > 0,~~~
       \gamma \le 1,~~~
       \alpha + \beta + \gamma > 1.
    \]
    Then $T_1$ and $T_2$ have a 
    common fixed point.
\end{assert}

The argument given as ``proof" of
Assertion~\ref{Tiwari3.5}
in~\cite{TiwariEtAl} is flawed as
follows. A sequence 
$\{x_n\}_{n=1}^{\infty} \subset X$
is created and the following
inequality is reached:
\begin{equation}
\label{Tiwari3.5Pineq}
    d(x_{2n+1},x_{2n+2}) \le 
   hd(x_{2n},x_{2n+1}),~~\mbox{where}~~
   h= \frac{1}{\alpha+\beta+\gamma}
\end{equation}
An implicit induction is then used
to claim that (\ref{Tiwari3.5ineq})
implies
\[ d(x_{2n},x_{2n+1}) \le h^{2n}d(x_0,x_1)
\]
However, the reasoning is incorrect,
since the left side 
of~ (\ref{Tiwari3.5Pineq}) requires
the smaller index to be odd, and there
is no analog for the smaller index
being even.

See section~\ref{modificationToTriviality}
for discussion of the triviality of this
assertion.

\subsection{On triviality of assertions
of~\cite{TiwariEtAl}}
\label{modificationToTriviality}
We consider conditions under which the assertions 
of~\cite{TiwariEtAl} reduce
to triviality. In the following, we require all
of the constants $\alpha, \beta, \gamma$, and
$\eta$ to be positive (perhaps this was intended
by the authors of~\cite{TiwariEtAl}, but as
written their assertions occasionally permit 
negative values). In other ways, our hypotheses
have greater generality, in that we omit certain 
hypotheses of~\cite{TiwariEtAl}.

\begin{prop}
        \label{Tiwari3.1trivialize}
    Let $(X,d)$ be a metric 
    space and suppose
    $T_1,T_2: X \to X$ are
    mappings such that $T_2$ is onto and
    for all $x,y \in X$,
   $T_1$ and $T_2$ satisfy any 
   of~(\ref{Tiwari3.1ineq}), (\ref{Tiwari3.2ineq}),
   (\ref{Tiwari3.3ineq}), (\ref{Tiwari3.4ineq}),
   or~(\ref{Tiwari3.5ineq})
    where all of $\alpha,\beta,\gamma$, and $\eta$
    are positive.
    Then $T_1 = T_2 = \id_X$.
    Further, if $\alpha > 1$ then
    $X$ has only one point.
\end{prop}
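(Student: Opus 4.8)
The plan is to exploit the expansive form of each inequality together with the surjectivity of $T_2$ in a single uniform stroke. Fix $x \in X$. Since $T_2$ is onto, I choose $y \in X$ with $T_2 y = T_1 x$. Substituting this $y$ into whichever of~(\ref{Tiwari3.1ineq})--(\ref{Tiwari3.5ineq}) is assumed, the left-hand side collapses to $d(T_1 x, T_2 y) = d(T_1 x, T_1 x) = 0$, while the right-hand side is a sum of terms, each a positive constant times either a distance or a maximum of distances. Because every distance is nonnegative and, by hypothesis, every one of $\alpha, \beta, \gamma, \eta$ is strictly positive, the right-hand side is a nonnegative quantity bounded above by $0$; hence each summand must vanish.

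From the vanishing of the individual summands I read off the conclusion. The term $\alpha\, d(x,y)$, present with positive coefficient in all five inequalities, forces $d(x,y)=0$, so $y=x$. A term carrying $d(x,T_1 x)$ with positive coefficient --- namely $\beta\, d(x,T_1x)$ in~(\ref{Tiwari3.1ineq}), the analogous term obtained after the substitution $T_2y=T_1x$ in~(\ref{Tiwari3.2ineq}), the coefficient $(\beta+\eta)$ on $d(x,T_1x)$ in~(\ref{Tiwari3.3ineq}), the $\alpha$- and $\beta$-weighted copies of $d(x,T_1x)$ in~(\ref{Tiwari3.4ineq}), or the term $\alpha\max\{\,d(x,y),\,d(x,T_1x),\,d(y,T_1x)\,\}$ in~(\ref{Tiwari3.5ineq}) --- forces $d(x,T_1 x)=0$, i.e. $T_1 x = x$. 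Combining $y=x$ with $T_2 y = T_1 x = x$ then yields $T_2 x = x$. Since $x$ was arbitrary, $T_1 = T_2 = \id_X$.

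For the final clause I feed the identities $T_1=T_2=\id_X$ back into the hypothesized inequality. Every distance of the form $d(x,T_1x)=d(x,T_2x)$ and $d(y,T_2y)$ collapses to $0$, and each surviving distance ($d(x,y)$, together with the terms $d(x,T_2y)$ and $d(y,T_1x)$, both equal to $d(x,y)$) reduces to $d(x,y)$; hence the right-hand side becomes $c\,d(x,y)$ for a constant $c$ that is at least $\alpha$ in every case. The inequality then reads $d(x,y)\ge c\,d(x,y)$ with $c\ge\alpha>1$, which forces $d(x,y)=0$ for all $x,y\in X$, so $X$ is a single point.

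I expect the only real work to be bookkeeping rather than conceptual: verifying that the single substitution $T_2y=T_1x$ does the job for all five inequalities, and in particular confirming the positive-coefficient claim for the $\max$-terms of~(\ref{Tiwari3.5ineq}) and for the bracketed expression of~(\ref{Tiwari3.4ineq}), where under either the literal or the evidently intended reading every coefficient remains positive. I would also note explicitly that the argument uses only the surjectivity of $T_2$ and the positivity of the constants, and none of the completeness, continuity, or sum-conditions imposed in the original theorems.
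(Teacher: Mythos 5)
Your proposal is correct and follows essentially the same route as the paper's proof: use surjectivity of $T_2$ to pick $y$ with $T_2y = T_1x$, so the left side vanishes and positivity of the coefficients forces every term on the right to vanish, giving $T_1 = T_2 = \id_X$; then substituting the identities back yields $d(x,y) \ge \alpha\, d(x,y)$, which for $\alpha > 1$ forces $X$ to be a singleton. Your extra bookkeeping for the $\max$-terms and the two readings of~(\ref{Tiwari3.4ineq}) is sound and merely makes explicit what the paper leaves implicit.
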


\begin{proof}
    Given $x_0 \in X$, $T_2$ being onto
    implies there exists $y_0 \in X$ such that
    $T_1 x_0 = T_2 y_0$. Thus, for the pair
    $(x_0,y_0)$, the left side each
    of~(\ref{Tiwari3.1ineq}), (\ref{Tiwari3.2ineq}),
   (\ref{Tiwari3.3ineq}), (\ref{Tiwari3.4ineq}),
   or~(\ref{Tiwari3.5ineq}) is 0, so each
    term of the right side is 0. Hence 
    \[ x_0 = y_0~~\mbox{and} ~~d(x_0,T_1 x_0) = 0 = d(y_0,T_2 y_0).\]
    Since $x_0$ is an arbitrary member of~$X$, it
    follows that $T_1 = T_2 = \id_X$.

    But then each
    of~(\ref{Tiwari3.1ineq}), (\ref{Tiwari3.2ineq}),
   (\ref{Tiwari3.3ineq}), (\ref{Tiwari3.4ineq}),
   or~(\ref{Tiwari3.5ineq}) implies
    \[ d(x,y) = d(T_1 x, T_2 y) \ge \alpha d(x,y),\]
    which is impossible if $x \neq y$ and 
    $\alpha > 1$. Thus $\alpha > 1$ implies $X$
    has a single point.
\end{proof}

\section{Further remarks}
We have continued the work
of~\cite{BxSt19,Bx19,Bx19-3,Bx20,Bx22,BxBad6}
in discussing flaws in some papers claiming
fixed point results in digital topology.
The literature of digital topology includes
many fixed point assertions that are correct, 
correctly proven, and beautiful. However,
papers we have considered here have many
errors and assertions that turn out to be
trivial.

Although authors are responsible for
their errors and other shortcomings,
it is clear that many of the papers
studied in the current paper were
reviewed inadequately, and should
have been rejected.

\end{document}